 \newcommand\myarat{\ar@}
\numberwithin{equation}{section} 
\newtheorem{theorem}{Theorem}[section] 
\newtheorem{proposition}[theorem]{Proposition} 
\newtheorem{corollary}[theorem]{Corollary} 
\newtheorem{definition}[theorem]{Definition} 
\newtheorem{remark}[theorem]{Remark} 
\newtheorem{lemma}[theorem]{Lemma} 
\newtheorem{example}[theorem]{Example}
\newtheorem{problem}[theorem]{Problem}
\def\Hom{\operatorname{Hom}}
\begin{document} 
 
\title{The motivic fundamental group of a punctured elliptic curve and algebraic cycles} 
 
\author{Jin Cao and Tomohide Terasoma} 

\address{
Jin Cao\\
Yau Mathematical Sciences Center,
Tsinghua University, Beijing, China
}
\email{caojin@mail.tsinghua.edu.cn}

\address{
Tomohide Terasoma\\
Faculty of Science and Engineering, 
Hosei University, Koganei, Tokyo, Japan
}
\email{terasoma@hosei.ac.jp }

\date{\today} 
 
\maketitle 
\markright{}
 
\makeatletter 

\begin{abstract}
In this paper, we consider the motivic fundamental group of the punctured elliptic curves as a DG complex in the DG category of elliptic motives and describe its resolution via Schur complexes. During this process, we find the algebraic cycles analogous to the Bloch-Totaro cycles.
\end{abstract}

\tableofcontents
\section{Introduction}
The algebraic fundamental group of $\mathbb{P}^1 - \{0, 1, \infty\}$ becomes one of the most interesting objects in arithmetic algebraic geometry after the suggestion of Grothendieck \cite{D}. In fact, the classical Belyi's theorem asserts that the canonical action of the absolute Galois group $\mathrm{Gal}(\overline{\mathbb{Q}} / \mathbb{Q})$ on the profinite completion of the fundamental group of $\mathbb{P}^1 - \{0, 1, \infty\}$ is faithful. This motivated Grothendieck to study the absolute Galois group by the aid of the fundamental group of $\mathbb{P}^1 - \{0, 1, \infty\}$. Further exploring these ideas, Deligne \cite{D} initiated the study of this fundamental group based on the theory of motives. From the motivic point view, it has been known that the correct object to study should be the nilpotent completion of the fundamental group and then one may consider its motivic version. Deligne and Goncharov \cite{DG} studied this motivic fundamental group and further derived interesting information of multiple zeta values, which are the periods of the motivic fundamental group of $\mathbb{P}^1 - \{0, 1, \infty\}$. A priority, the motivic fundamental group of $\mathbb{P}^1 - \{0, 1, \infty\}$ is an object in the category of mixed Tate motives. For the different constructions of mixed (Tate) motives, we refer to \cite{L05}. Undoubtedly, each construction has its advantages. If we assume that the base field satisfied the Beilinson-Soul\'{e} vanishing conjectures, then all the K-theoretical or algebraic cycle theoretical constructions of mixed Tate motives coincide, for example, Bloch-Kriz-May's construction \cite{BK,KM}, Levine's construction and even Voevodsky's construction \cite{L10}. Then it is natural to search a good understanding the motivic fundamental group of $\mathbb{P}^1 - \{0, 1, \infty\}$ in these categories.
\

In  \cite{G11}, Guillou described a construction of the motivic fundamental group of $\mathbb{P}^1 - \{0, 1, \infty\}$ in the Bloch-Kriz category of mixed Tate motives. The rough idea is as follows: Following Beilinson, Deligne and Goncharov, there is a topological construction of (n-truncated) nilpotent completion of a topological space $X$ under suitable conditions on $X$. One may mimick this construction in the motivic world by choosing a suitable complex, where replaces the topological complexes by Bloch's cycle complexes or their variants. In particular, in Bloch-Kriz-May's category of mixed Tate motives, Guillou tried to find out a cellular approximation of this complex in the case of $X = \mathbb{P}^1 - \{0, 1, \infty\}$. During this process, Totaro's cycle \cite{Tot} and their generalizations -- Bloch-Totaro cycles appear naturally. 
\

On the other hand, there are natural generalizations of Bloch-Kriz-May's construction in the case of elliptic curves, see \cite{C, KT, P}, i.e., a reasonable (triangulated) category of mixed motives for an elliptic curve. However the Beilinson-Soul\'{e} vanishing conjectures are unknown in this case. Anyway one may ask: 
\textbf{
\begin{problem}
How to construct the motivic fundamental group of a punctured elliptic curve in the category of mixed elliptic motives like the case of $\mathbb{P}^1 - \{0, 1, \infty\}$? Could we find some generalization of Bloch-Totaro cycles in the elliptic case? 
\end{problem}
}

In this paper, we fix an elliptic curve $E$ without complex multiplication. The paper is organized as follows. In the second section, we briefly review the triangulated (DG) category of motives of an elliptic curve. Moreover we will compute some concrete boundary cycles appearing in the decomposition of elliptic motives, which are crucial for the further application. In the third section, we consider linear cycles, which are firstly defined in \cite{BL}, together with their decompositions, see Proposition \ref{dec of linear cycles}. Moreover we provide an explicit decomposition in the following section. In the fifth section, we consider the resolution of DG complexes, which is similar to the resolution of cell modules in the sense of \cite{KM, C}.  As an application, we consider the nilpotent completion of the fundamental group of a punctured elliptic curve in the motivic world.

\section{Motives of an elliptic curve}
\subsection{Description of elliptic motives via DG complexes}
In this subsection, we will give a very short introduction to motives of an elliptic curve via the language of DG complexes. 
\

We let $\mathbf{DM}_{\mathrm{gm}}(k)$ be Voevodsky's triangulated category of geometric motives.
For a projective smooth variety $X$, we can associate a motive $M(X)$ of $X$ in $\mathbf{DM}_{\mathrm{gm}}(k)$. 
\

We define
\[
\mathbf{R}\Gamma_M(X)=M(X)^*=M(X)(-d_X)[-2d_X]
\]
and for any integer $p$, we let
\[
\underline{\Hom}^{-p}_M(\mathbf{R}\Gamma_M(Y),\mathbf{R}\Gamma_M(X))
=Z^{d_Y}(X \times Y,p).
\]
Then we can use these data to define a (quasi)-DG category\footnote{Here "quasi" means that the composition of internal homomorphism is well-defined up to quasi-isomorphism. However if one insists the Friedlander-Suslin construction, one may drop it.} $\widetilde{\mathbf{DM}}_{\mathrm{gm}}(k)$, whose homotopy category is $\mathbf{DM}_{\mathrm{gm}}(k)$.
If we associate $f: X \to Y$ with 
\[
f^* = \ ^t \Gamma_f \in \underline{\Hom}_M^0(\mathbf{R}\Gamma_M(Y),\mathbf{R}\Gamma_M(X)),
\]
then we will get a contravariant functor:
\[
\mathbf{R}\Gamma_M:(\mathbf{SmProj})^{op}\to \widetilde{\mathbf{DM}}_{\mathrm{gm}}(k)
\]
The functoriality of $\mathbf{R}\Gamma_M$ comes from the composition law of correspondences if we view $f^*$ and $g^*$ as correspondences, where $f: X\to Y$ and $g:Y \to Z$.

For an idempotent 
$\epsilon_X\in End(\mathbf{R}\Gamma_M(X))$
and 
$\epsilon_Y\in End(\mathbf{R}\Gamma_M(Y)), $
we have:
\[
\underline{\Hom}^{-p}(\epsilon_Y\mathbf{R}\Gamma_M(Y),\epsilon_X\mathbf{R}\Gamma_M(X))
=\epsilon_XZ^{d_Y}(X \times Y,p)\epsilon_Y
\]
and more generally,
\[
\underline{\Hom}^{-p}(\epsilon_Y\mathbf{R}\Gamma_M(Y),\epsilon_X\mathbf{R}\Gamma_M(X)(m)[2m])
=\epsilon_XZ^{d_Y+m}(X \times Y,p)\epsilon_Y
\]
\begin{definition}
We denote the DG category generated by the motives of an elliptic curve $E$ by $\widetilde{\mathbf{DMEM}}(k, \mathbb{Q})_E$.   
\end{definition}

\begin{example}
Let $E$ be an elliptic curve. We set
$$
V=\mathbf{R}\Gamma_M(E)^{(-)}[1]=\epsilon\mathbf{R}\Gamma_M(E)[1]
$$
with
\begin{equation} \label{1-comp}
\epsilon=\frac{1}{2}[(x,x)-(x,-x)]\in Z^1(E\times E).
\end{equation}
\end{example}

We recall that the main result in \cite{C} (Theorem 8.13), which says that: 
\begin{theorem}
There is a cycle algebra $\mathcal{E}_{ell}$, which is called the elliptic cycle of $E$ such that the triangulated category consists of compact objects in $\mathcal{D}_{\mathcal{E}_{ell}}^{GL_2}$ is equivalent to the triangulated subcategory $\mathbf{DMEM}(k, \mathbb{Q})_E$ of Voevodsky's motives generated by the motives of $E$.  Here $\mathcal{D}_{\mathcal{E}_{ell}}^{GL_2}$ is the derived category of dg $\mathcal{E}_{ell}$-modules.
\end{theorem}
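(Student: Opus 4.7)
The plan is to construct the equivalence by a Morita-type argument in the DG setting, generalizing the Kriz--May correspondence for mixed Tate motives to the elliptic case. The $GL_2$-equivariance reflects the fact that $V = \epsilon\mathbf{R}\Gamma_M(E)[1]$ plays the role of a rank-two object, so its tensor powers carry natural Schur-functor decompositions, and one wants to package all of these together into a single equivariant module category rather than tracking them separately, just as $\mathbb{G}_m$-equivariance tracks Tate twists in the Bloch--Kriz story.

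Concretely, I would first identify the generators of $\widetilde{\mathbf{DMEM}}(k,\mathbb{Q})_E$ as the objects $V^{\otimes n}(m)[2m]$, whose morphism complexes are already described in Section~2.1 as cycle groups of the form $\epsilon^{\otimes m} Z^{d+q}(E^{m+n},p) \epsilon^{\otimes n}$. Assembling all these complexes under composition of correspondences and Bloch's boundary produces a DG category with many objects, and reorganizing it via its idempotent decomposition yields the cycle algebra $\mathcal{E}_{ell}$. The natural $GL_2$-action on $V$ extends to all tensor powers and endows $\mathcal{E}_{ell}$ with a $GL_2$-structure. The comparison functor sends a compact $GL_2$-equivariant DG $\mathcal{E}_{ell}$-module $M$ to $M \otimes^{\mathbf{L}}_{\mathcal{E}_{ell}} V^\bullet$ inside the motivic DG category; this functor is triangulated, preserves direct sums, and sends the free rank-one module to a compact generator, so by a Keller-type theorem for DG algebras it restricts to an equivalence between compact $GL_2$-equivariant modules and $\mathbf{DMEM}(k,\mathbb{Q})_E$.

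The main obstacle, and the reason the construction requires substantial work in \cite{C}, is realizing the $GL_2$-action at the level of cycles rather than after passing to a realization. The hypothesis that $E$ has no complex multiplication enters here: it guarantees $\mathrm{End}(h^1(E)) = \mathbb{Q}$, so the $GL_2$-action on $V$ does not conflict with any motivic endomorphisms. One still has to verify the compatibility of the symmetric-group action on the factors of $E^n$, the sign idempotents $\epsilon^{\otimes n}$, and the K\"unneth projectors, so that $\mathcal{E}_{ell}$ is a genuine $GL_2$-equivariant DG algebra and compact modules over it exhaust precisely the triangulated subcategory generated by motives of powers of $E$.
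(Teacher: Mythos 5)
The paper does not prove this statement; it is quoted verbatim as Theorem~8.13 of \cite{C}, so there is no in-paper argument for me to compare your proposal against. What I can say is that your outline reproduces the expected high-level template for a Bloch--Kriz/Kriz--May--type result adapted to the elliptic case: build a cycle DGA out of the Hom-complexes $\epsilon^{\otimes m}Z^{\bullet}(E^{m+n},\bullet)\epsilon^{\otimes n}$, observe that the rank-two object $V$ carries a $GL_2$-action that tracks Schur-functor pieces the way $\mathbb{G}_m$ tracks Tate twists, and then invoke a Keller/Morita equivalence between compact equivariant DG modules and the triangulated subcategory generated by motives of powers of $E$. That is consistent with how \cite{C} is used and cited throughout Section~2, and you correctly flag the actual crux (realizing the $GL_2$-structure at the level of cycles, not just after realization) and the role of the no-CM hypothesis.

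Two cautions, both of which you partially acknowledge but leave at the level of ``one still has to verify.'' First, passing from a DG category with objects $V^{\otimes n}(m)[p]$ to a single $GL_2$-equivariant DG algebra $\mathcal{E}_{ell}$ is not a mere reorganization: one must show the assembled object is strictly associative and equivariant, and that the functor from its compact modules is essentially surjective onto the subcategory generated by all $\mathbf{R}\Gamma_M(E^n)$, not merely by the $V^{\otimes n}$ summands. Since $\mathbf{R}\Gamma_M(E)\cong\mathbb{Q}\oplus V[-1]\oplus\mathbb{Q}(-1)[-2]$ with $\mathbb{Q}(-1)\cong\wedge^2 V$, this reduces to tensor powers of $V$, but that reduction itself is part of what must be proved and is exactly what the boundary-cycle computations in Section~2.2 of the present paper are calibrated to. Second, the Keller-type step requires knowing that the image of the free module is a compact \emph{generator}, which here hinges on a K\"unneth formula for the Hom-complexes (composition of correspondences computing the right cycle groups); this is standard in the Voevodsky/Friedlander--Suslin framework but is not automatic from the quasi-DG structure as stated, and is where the ``quasi'' caveat in the paper's footnote would need to be discharged. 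Neither of these is an error in your sketch, but they are the places where a real proof must do work, and since the paper outsources all of it to \cite{C}, the comparison ends there.
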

The DG category of cell $\mathcal{E}_{ell}$-modules provides a natural dg enhancement of $\mathbf{DMEM}(k, \mathbb{Q})_E$. For any DG category, the second author introduces the notions of DG complexes in a DG category in \cite[Subsection 2.2]{T10}.
\begin{definition}
A sequence $K = \{M_i, f_{ij}\}$, where $M_i \in \widetilde{\mathbf{DMEM}}(k, \mathbb{Q})_E$, is called a DG complex in $\widetilde{\mathbf{DMEM}}(k, \mathbb{Q})_E$ if
\begin{enumerate}
\item
$f_{ij} \in \underline{Hom}_{\widetilde{\mathbf{DMEM}}(k, \mathbb{Q})_E}^{1+j-i}(M_j, M_i)$ for $i > j$\footnote{$\underline{Hom}$ denote the internal homorphism in $\widetilde{\mathbf{DMEM}}(k, \mathbb{Q})_E$.};
\item
$\partial(f_{ik}) + \sum_{i > k >j} f_{ij} \circ f_{jk} = 0$.
\end{enumerate}
Then the category of DG complexes in $\widetilde{\mathbf{DMEM}}(k, \mathbb{Q})_E$ forms a (quasi-)DG category.
\end{definition}
\begin{remark}
One may equivalently consider the DG complexes in the DG category of dg $\mathcal{E}_{ell}$-modules due to the above theorem. We don't distinguish these two DG categories and DG categories of DG complexes. We denote it by $K\mathcal{C}_{\mathcal{E}_{ell}}$.
\end{remark}
\begin{remark}
Based on the construction of simplicial bar construction, the second author also defines the following equivalences between DG categories: 
\begin{itemize}
\item{(Theorem 6.4 in \cite{T10})}
The DG category $K\mathcal{C}_{A}$ of finite DG complexes in the DG category of dg $A$-modules is homotopy equivalent to the DG category of bounded comodules over the simplicial bar complex $B_{simp}(\epsilon | A | \epsilon)$, where the morphisms of latter category need a little modification.
\end{itemize}
One can similarly define the simplical bar complexes over $GL_2$ by requiring each term appearing in the definitions of \cite[Section 4]{T10} equipped with a $GL_2$-action. Then these constructions can be applied to the following case: The DG category $K\mathcal{C}_{\mathcal{E}_{ell}}$ of finite DG complexes in the category of elliptic motives is homotopy equivalent to the DG category of $B_{simp}(\mathcal{E}_{ell},\epsilon)$-comodules or even the DG category of $B(\mathcal{E}_{ell},\epsilon)$-comodules. Due to these equivalences, we will only focus on the structure of the motivic fundamental group of a punctured elliptic curve as a DG complex.
\end{remark}

\subsection{Decomposition of elliptic motives and algebraic cycles}
We recall the definition of the theta function $\theta_{1,1}(z,\tau)$  in \cite{Igusa}, which is defined as
\[
\sum_{n \in \mathbb{Z}}\exp\{2\pi i [\frac{\tau}{2}(n+\frac{1}{2})^2 + (n+\frac{1}{2})(z+\frac{1}{2})]\},
\]
where $z \in \mathbb{C}, \tau \in \mathbb{H}$, the upper half plane. We let $L$ be the lattice generated by $1$ and $\tau$. Then $\theta_{1,1}(z,\tau)$ has exactly one zero in $\mathbb{C}/L$, which is the original point $0$ in $\mathbb{C}/L$. For simplicity, we denote it by $\theta$.
\begin{theorem} \label{theta-theorem}
Fix positive integers $n, a_i(i = 1, \cdots, m)$, $b_j(j = 1, \cdots, l)$. We let $f_i(x_1, \cdots, x_n)$ and $ g_j(x_1, \cdots, x_n)$ be the degree $1$ homogeneous polynomials in $x_1, \cdots, x_n$ with integer coefficients. If
\begin{equation} \label{condition for theta}
\begin{split}
& a_1 f_1^2(x_1, \cdots, x_n) + \cdots + a_m f_m^2(x_1, \cdots, x_n) \\
= & b_1 g_1^2(x_1, \cdots, x_n) + \cdots + b_l g_l^2(x_1, \cdots, x_n),
\end{split}
\end{equation}
then
\begin{equation}
\frac{\theta(f_1(x_1, \cdots, x_n))^{a_1} \theta(f_2(x_1, \cdots, x_n))^{a_2}  \cdots \theta(f_m(x_1, \cdots, x_n))^{a_m}}{\theta(g_1(x_1, \cdots, x_n))^{b_1} \theta(g_2(x_1, \cdots, x_n))^{b_2} \cdots \theta(g_l(x_1, \cdots, x_n))^{b_l}}
\end{equation}
is a rational function on $E^n \cong (\mathbb{C}/L)^n$.
\end{theorem}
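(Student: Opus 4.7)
The plan is to verify that the proposed $F$ is invariant under translation of each variable $x_k$ by both periods $1$ and $\tau$, hence descends from $\mathbb{C}^n$ to a meromorphic function on $E^n = (\mathbb{C}/L)^n$; since $E^n$ is projective, this meromorphic function is automatically a rational function by GAGA. The first step is to record the standard quasi-periodicity of $\theta = \theta_{1,1}$, namely $\theta(z+1) = -\theta(z)$ and, by iteration, $\theta(z + c\tau) = (-1)^c e^{-\pi i c^2 \tau - 2\pi i c z}\theta(z)$ for every $c \in \mathbb{Z}$.

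Writing $f_i = \sum_{k=1}^n c_{ik} x_k$ and $g_j = \sum_{k=1}^n c'_{jk} x_k$ with $c_{ik}, c'_{jk} \in \mathbb{Z}$, the hypothesis \eqref{condition for theta} is equivalent, via comparison of the coefficients of $x_k x_s$, to the system of bilinear identities
\begin{equation} \label{eq:bilinear-theta-proof}
\sum_{i=1}^m a_i\, c_{ik} c_{is} \;=\; \sum_{j=1}^l b_j\, c'_{jk} c'_{js} \qquad (1 \le k, s \le n),
\end{equation}
which encodes the entire content of the assumption.

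Next I would translate $x_k \mapsto x_k + 1$: the ratio $F$ picks up the scalar $(-1)^{\sum_i a_i c_{ik} - \sum_j b_j c'_{jk}}$. Since $c \equiv c^2 \pmod 2$ for any integer $c$, this exponent is congruent mod $2$ to $\sum_i a_i c_{ik}^2 - \sum_j b_j (c'_{jk})^2$, which vanishes by the diagonal $s=k$ case of \eqref{eq:bilinear-theta-proof}. For the translation $x_k \mapsto x_k + \tau$, the same sign reappears and cancels, the $\tau$-quadratic factor $\exp\bigl(-\pi i \tau (\sum_i a_i c_{ik}^2 - \sum_j b_j (c'_{jk})^2)\bigr)$ is again $1$ by the diagonal identity, and the remaining $z$-linear factor
\[
\exp\Bigl(-2\pi i \sum_{s=1}^n \Bigl(\sum_i a_i c_{ik} c_{is} - \sum_j b_j c'_{jk} c'_{js}\Bigr) x_s\Bigr)
\]
vanishes by the full system \eqref{eq:bilinear-theta-proof}. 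Thus $F$ is $L$-periodic in each variable and descends to a meromorphic function on $E^n$.

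The main obstacle is essentially the bookkeeping of these exponential factors, but the delicate point I would emphasize is the role of the off-diagonal ($k\ne s$) bilinear relations: the diagonal identities alone already kill the signs and the $\tau$-quadratic factors, but annihilating the $z$-linear factor arising in the $\tau$-translation requires every mixed pairing between the $k$-th and $s$-th coordinate directions to match, which is exactly what the $x_k x_s$ coefficients in \eqref{condition for theta} supply. Once $L$-periodicity in all $n$ variables is established, GAGA (or equivalently Chow's theorem) immediately promotes the meromorphic function on the projective variety $E^n$ to a rational function, completing the argument.
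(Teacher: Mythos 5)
Your argument is correct and follows essentially the same route as the paper's: both proofs use the quasi-periodicity $\theta(z+c\tau) = (-1)^c e^{-\pi i c^2\tau - 2\pi i cz}\theta(z)$ together with $\theta(z+1)=-\theta(z)$, expand the condition \eqref{condition for theta} into the bilinear identities on the coefficients of $x_k x_s$, and use the diagonal case to kill the sign and $\tau$-quadratic factors and the full system to kill the $z$-linear factor. Your presentation is slightly more systematic (writing out the full bilinear system once, treating both $+1$ and $+\tau$ explicitly, and naming GAGA/Chow at the end), but the mathematical content coincides with the paper's proof.
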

\begin{proof}
According to the transformation law in \cite[P123]{Igusa}, we have:
\[
\theta(z+p\tau) = (-1)^{-p} \exp[2\pi i(-p^2\frac{\tau}{2} - pz)] \theta(z)
\]
for $p \in \mathbb{Z}$. We let
\[
f_i(x_1,x_2, \cdots, x_n) = \sum_{ r= 1}^n c_{ir} x_r
\]
and
\[
g_j(x_1, x_2, \cdots, x_n) = \sum_{s= 1}^n d_{js} x_s.
\]
After changing the coordinates
\[
(x_1, x_2, \cdots, x_n) \to (x_1 + \tau, x_2, \cdots, x_n),
\]
we have:
\[
\theta(f_1(x_1, \cdots, x_n)) \to (-1)^{-c_{11}} \exp[2\pi i (-c_{11}^2 \frac{\tau}{2} - c_{11}f_1)] \theta(f_1(x_1, \cdots, x_n)).
\]
In general, we have:
\[\theta(f_1(x_1, \cdots, x_n))^{a_1} \theta(f_2(x_1, \cdots, x_n))^{a_2}  \cdots \theta(f_m(x_1, \cdots, x_n))^{a_m}\]
becomes
\begin{equation} \nonumber
\begin{split}
&(-1)^{-(a_1c_{11}+\cdots+a_mc_{m1})} \exp\{2\pi i [-(a_1c_{11}^2+\cdots+a_mc_{m1}^2) \frac{\tau}{2} - (a_1c_{11}f_1+\cdots+ \\ 
&a_mc_{m1}f_m)]\} 
\theta(f_1(x_1, \cdots, x_n))^{a_1} \theta(f_2(x_1, \cdots, x_n))^{a_2}  \cdots \theta(f_m(x_1, \cdots, x_n))^{a_m}
\end{split}
\end{equation}
We have a similar formula for 
\[\theta(g_1(x_1, \cdots, x_n))^{b_1} \theta(g_2(x_1, \cdots, x_n))^{b_2} \cdots \theta(g_l(x_1, \cdots, x_n))^{b_l}.\]
Note that the assumption (\ref{condition for theta}) implies that:
\begin{itemize}
\item
$a_1c_{11}^2+\cdots+a_mc_{m1}^2 = b_1d_{11}^2+\cdots+b_ld_{l1}^2$ if we compare the coefficients of $x_1^2$ in LHS of  (\ref{condition for theta}) with its RHS;
\item
Similarly we can get $a_1c_{11}f_1+\cdots +a_mc_{m1}f_m = b_1d_{11}g_1+\cdots +b_ld_{l1}g_l$ if we compare the coefficients of $x_1x_i$ between LHS and RHS of (\ref{condition for theta}) and sum these coefficients together.
\end{itemize}
Note that $a_p c_{pq}^2 \equiv a_p c_{pq} \ \mathrm{mod} \ 2$. The equality
\[a_1c_{11}^2+\cdots+a_mc_{m1}^2 = b_1d_{11}^2+\cdots+b_ld_{l1}^2\]
implies that
\[
a_1c_{11}+\cdots+a_mc_{m1} \equiv b_1d_{11}+\cdots+b_ld_{l1} \ \mathrm{mod} \ 2.
\]
Hence for any $1 \leq t \leq n$, we have:
\[(-1)^{-(a_1c_{1t}+\cdots+a_mc_{mt})} = (-1)^{-(b_1d_{1t}+\cdots+b_ld_{lt})}.\]
Therefore for any $\tau \in L$, 
\[
\frac{\theta(f_1(x_1, \cdots, x_n))^{a_1} \theta(f_2(x_1, \cdots, x_n))^{a_2}  \cdots \theta(f_m(x_1, \cdots, x_n))^{a_m}}{\theta(g_1(x_1, \cdots, x_n))^{b_1} \theta(g_2(x_1, \cdots, x_n))^{b_2} \cdots \theta(g_l(x_1, \cdots, x_n))^{b_l}}
\]
is invariant under the transformation $(x_1, \cdots, x_n) \to (x_1+\tau, \cdots, x_n)$. Similarly, this implies that
\[
\frac{\theta(f_1(x_1, \cdots, x_n))^{a_1} \theta(f_2(x_1, \cdots, x_n))^{a_2}  \cdots \theta(f_m(x_1, \cdots, x_n))^{a_m}}{\theta(g_1(x_1, \cdots, x_n))^{b_1} \theta(g_2(x_1, \cdots, x_n))^{b_2} \cdots \theta(g_l(x_1, \cdots, x_n))^{b_l}}
\]
is invariant under the action of $L^n$. Hence it is a rational function on $E^n$.
\end{proof}

Motivated by Lemma 4.2 in \cite{KT}, we have the following observations. We will also follow the notions used in Section 4.1 of \cite{KT}. Recall that we let $\pi_i: E^n \to E$ and $\pi_{ij}: E^n \to E \times E$ be the projection to $i$-th and $j$-th components, and the diagonal (resp. anti-diagonal) in $E \times E$ is denoted by $\Delta$ (resp. $\Delta^{-}$). The classes $\pi^*_i([0])$, $\pi^*_{ij}(\Delta)$ and $\pi^*_{ij}(\Delta^{-})$ in $Z^1(E^n)$ are denoted by $P_i$, $\Delta_{ij}$ and $\Delta_{ij}^{-}$ respectively. We set
\[
D_{ij} = - \Delta_{ij}+P_i+P_j, D_{ij}^{-} = - \Delta_{ij}^{-}+P_i+P_j,
\]
for $i \neq j$. Hence $D_{ij} = D_{ji}$. Via the map $E \times E \xrightarrow{id \times \sigma} E \times E, (x, y) \to (x, -y)$, we have $(id \times \sigma)^*(D_{12}) = - D_{12} = D_{12}^{-}$ in $CH^2(E^2)$.
\begin{proposition}
We have a decomposition 
\[
\mathbf{R}\Gamma_M(E) \cong \mathbb{Q} \oplus V[-1] \oplus \mathbb{Q}(-1)[-2].
\]
More precisely, we have two morphisms:
\[
\mathbb{Q} \oplus V[-1] \oplus \mathbb{Q}(-1)[-2] \xrightarrow{\beta} \mathbf{R}\Gamma_M(E) \xrightarrow{\alpha}  \mathbb{Q} \oplus V[-1] \oplus \mathbb{Q}(-1)[-2]
\]
such that 
\begin{itemize}
\item
$\alpha \beta = id$;
\item
there exists an algebraic cycle $W_{\mathbf{R}\Gamma_M(E)}$ such that
\[
\partial W_{\mathbf{R}\Gamma_M(E)} = \beta \alpha - id_{\mathbf{R}\Gamma_M(E)}.
\]
\end{itemize}
\end{proposition}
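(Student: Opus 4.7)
The plan is to lift the standard Chow--K\"unneth decomposition of the elliptic curve to the cycle level and to use Theorem \ref{theta-theorem} to produce an explicit higher Chow chain realizing the required homotopy. I would take $\beta = (\beta_0, \beta_1, \beta_2)$ and $\alpha = (\alpha_0, \alpha_1, \alpha_2)^T$ defined by
\[
\beta_0 = 1 \in Z^0(E), \quad \beta_1 = \epsilon \in Z^1(E \times E), \quad \beta_2 = [0] \in Z^1(E),
\]
\[
\alpha_0 = [0] \in Z^1(E), \quad \alpha_1 = \epsilon, \quad \alpha_2 = 1 \in Z^0(E),
\]
each placed in the appropriate Hom-group, with the Tate twist on the $\mathbb{Q}(-1)[-2]$ factor absorbed by the codimension shift (so for instance $\beta_2 \in \underline{\Hom}^0(\mathbb{Q}(-1)[-2], \mathbf{R}\Gamma_M(E)) = Z^1(E)$).

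Verifying $\alpha\beta = \mathrm{id}$ reduces to inspecting the $3\times 3$ matrix of compositions. The diagonal entries $\alpha_0\circ\beta_0 = \alpha_2\circ\beta_2 = 1$ are immediate intersection computations, while $\alpha_1\circ\beta_1 = \epsilon^2 = \epsilon$ holds on the nose from the relations $\Delta\circ\Delta^{-} = \Delta^{-}\circ\Delta = \Delta^{-}$ and $\Delta^{-}\circ\Delta^{-} = \Delta$. The off-diagonal entries vanish strictly: those involving both $\epsilon$ and evaluation at the origin cancel since $-0 = 0$; those involving pushforwards of $\epsilon$ vanish because $p_{1*}\Delta = p_{1*}\Delta^{-} = [E]$; and the remaining entries live in Hom-groups that are zero for dimensional reasons (e.g.\ $Z^{-1}(\mathrm{pt}) = 0$).

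Computing $\beta\circ\alpha = \sum_i \beta_i\circ\alpha_i \in Z^1(E \times E)$ then yields $P_2 + \epsilon + P_1$, so that
\[
\beta\alpha - \mathrm{id}_{\mathbf{R}\Gamma_M(E)} = P_1 + P_2 + \tfrac{1}{2}(\Delta - \Delta^{-}) - \Delta = -\tfrac{1}{2}\bigl(\Delta + \Delta^{-} - 2P_1 - 2P_2\bigr).
\]
To exhibit this as a boundary, I would apply Theorem \ref{theta-theorem} with $n = 2$, $a_1 = a_2 = 1$, $b_1 = b_2 = 2$, and linear forms $f_1 = x_1 - x_2$, $f_2 = x_1 + x_2$, $g_1 = x_1$, $g_2 = x_2$. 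The required identity $f_1^2 + f_2^2 = 2g_1^2 + 2g_2^2 = 2x_1^2 + 2x_2^2$ is manifest, so
\[
F(x_1, x_2) = \frac{\theta(x_1 - x_2)\,\theta(x_1 + x_2)}{\theta(x_1)^2\,\theta(x_2)^2}
\]
is a rational function on $E \times E$ whose divisor is $\Delta + \Delta^{-} - 2P_1 - 2P_2$, since $\theta$ vanishes precisely at the origin. Setting $W_{\mathbf{R}\Gamma_M(E)} := -\tfrac{1}{2}[\Gamma_F] \in Z^1(E\times E, 1)$, where $[\Gamma_F]$ is the graph of $F$ viewed as a cycle in $E\times E \times \square^1$, Bloch's boundary formula $\partial[\Gamma_F] = \mathrm{div}(F)$ delivers $\partial W_{\mathbf{R}\Gamma_M(E)} = \beta\alpha - \mathrm{id}$. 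The main subtlety lies in bookkeeping the Tate twists correctly so that every cycle above sits in the right twisted Hom-complex; proper intersection of $[\Gamma_F]$ with the faces of $\square^1$ is automatic since $F$ is a bona fide rational function.
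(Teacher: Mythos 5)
Your proposal matches the paper's proof in every essential respect: the same Chow--K\"unneth correspondences (structure map, projector $\epsilon$, graph of the origin) for $\alpha$ and $\beta$, the same reduction of $\beta\alpha - \mathrm{id}$ to $-\tfrac{1}{2}(\Delta + \Delta^{-} - 2P_1 - 2P_2)$, and the same theta function $\theta(x_1-x_2)\theta(x_1+x_2)/\theta(x_1)^2\theta(x_2)^2$ (verified by the quadratic identity $(x_1-x_2)^2 + (x_1+x_2)^2 = 2x_1^2 + 2x_2^2$ required by Theorem \ref{theta-theorem}) whose graph furnishes the homotopy $W_{\mathbf{R}\Gamma_M(E)}$. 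Your intermediate checks of $\alpha\beta = \mathrm{id}$ are somewhat more explicit than the paper's, but the route is the same.
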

\begin{proof}
Note that 
$\beta_1 := \pi^*$, where $\pi: E \to Spec(k)$ is the structure map, defines a morphism from $\mathbb{Q} = \mathbf{R}\Gamma_M(Spec(k))$ to $\mathbf{R}\Gamma_M(E)$. $\beta_2 = \epsilon$, defined in (\ref{1-comp}), induces a morphism between $V[-1]$ and $\mathbf{R}\Gamma_M(E)$. Let $\beta_3$ be the transpose of the graph $i_0$, where $i_0: \{0\} \to E$, and then we define $\beta = \beta_1 \oplus \beta_2 \oplus \beta_3$. Similarly we define $\alpha$ to be the sum of the graph of the morphism $i_0$, $\epsilon$ and the transpose $\pi^*$. Then one may check that $\alpha \beta$ induces an identity map on $\mathbb{Q} \oplus V[-1] \oplus \mathbb{Q}(-1)[-2]$. Furthermore we have: 
\[
\beta \alpha = E \times \{0\} + \frac{1}{2}(\Delta - \Delta^-) + \{0\} \times E \in Z^1(E \times E).
\]
Hence we have
\begin{equation}
\begin{split}
& \beta \alpha - id \\
= & E \times \{0\} + \frac{1}{2}(\Delta - \Delta^-) + \{0\} \times E - \Delta \\
= & E \times \{0\} - \frac{1}{2}(\Delta + \Delta^-) + \{0\} \times E \in Z^1(E \times E)
\end{split}
\end{equation}
It is easy to check the function $\frac{\theta(x+y)\theta(x-y)}{\theta(x)^2 \theta(y)^2}$ on $(x, y) \in E \times E$ satisfies the condition in the Theorem \ref{theta-theorem}, which is a rational function on $E \times E$. Moreover, 
\[
\mathrm{div}(\frac{\theta(x+y)\theta(x-y)}{\theta(x)^2 \theta(y)^2}) = (\Delta + \Delta^-) - 2(E \times \{0\} + \{0\} \times E).
\]
Therefore we can find an algebraic cycle $W_{\mathbf{R}\Gamma_M(E)}$ such that
\[
\partial W_{\mathbf{R}\Gamma_M(E)} = \beta \alpha - id_{\mathbf{R}\Gamma_M(E)}.
\]
\end{proof}
\begin{remark}
In the above proof, we show that there exists an algebraic cycle $W_{V12}$ -- the half of the graph of the function $\frac{\theta(x+y)\theta(x-y)}{\theta(x)^2 \theta(y)^2}$ such that $\partial W_{V12} = - D_{12} - (id \times \sigma)^* D_{12}$. Here the lower index $12$ will be useful in the future.
\end{remark}
\begin{proposition}
We have a decomposition 
\[
\wedge^2 V \cong  \mathbb{Q}(-1).
\]
More precisely, we have two morphisms:
\[
\mathbb{Q}(-1) \xrightarrow{\beta} \wedge^2 V \xrightarrow{\alpha} \mathbb{Q}(-1)
\]
such that 
\begin{itemize}
\item
$\alpha \beta =  id$;
\item
there exists an algebraic cycle $W_{\wedge^2 V}$ such that
\[
\partial W_{\wedge^2 V} = \beta \alpha - id_{\wedge^2 V}.
\]
\end{itemize}
\end{proposition}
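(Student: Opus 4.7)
The plan is to follow the template of the previous proposition: construct $\alpha$ and $\beta$ as explicit algebraic correspondences, verify $\alpha\beta = \mathrm{id}_{\mathbb{Q}(-1)}$ by an intersection-number computation, and then realise $\beta\alpha - \mathrm{id}_{\wedge^2 V}$ as the boundary of an algebraic cycle built from a theta function via Theorem~\ref{theta-theorem}.

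First I would identify the idempotent cutting out $\wedge^2 V$ inside $V\otimes V = \epsilon_{12}\mathbf{R}\Gamma_M(E^2)[2]$. Because $V = \mathbf{R}\Gamma_M(E)^{(-)}[1]$ sits in cohomological degree $0$, the two $[1]$-shifts contribute an extra Koszul sign to the swap, so the DG-exterior square is cut out by $\eta = \tfrac12(\mathrm{id} + \tau^{\ast})\epsilon_{12}$, where $\tau\colon E^2\to E^2$ swaps the factors; at the level of cohomology this is the usual isomorphism $\wedge^2 H^1(E)\cong H^2(E) = \mathbb{Q}(-1)$ induced by cup product. Both $\alpha$ and $\beta$ would then be proportional to the primitive diagonal $\delta := \Delta - P_1 - P_2 \in Z^1(E^2)$: it is $\tau^{\ast}$-invariant, so it already lies in $\eta Z^1(E^2)$, and under Poincar\'e duality it represents the identity endomorphism of $H^1(E)$. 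I would take $\alpha$ to be represented by the cycle $\delta$ and $\beta$ by $-\tfrac12\delta$ via the Hom/cycle dictionary recalled at the beginning of Section~$2$.

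The relation $\alpha\beta = \mathrm{id}_{\mathbb{Q}(-1)}$ then reduces to the intersection number $-\tfrac12\,\delta\cdot\delta$ on $E^2$. Using the self-intersection formula $\Delta^2 = \chi_{\mathrm{top}}(E) = 0$, together with $\Delta\cdot P_i = 1$, $P_i^2 = 0$ and $P_1\cdot P_2 = 1$, one computes $\delta^2 = -2$ and hence $\alpha\beta = 1$. The composition $\beta\alpha$ goes through a point and is therefore represented by the exterior product $-\tfrac12\,\delta\boxtimes\delta \in Z^2(E^4)$, while $\mathrm{id}_{\wedge^2 V}$ is represented by $\eta$ viewed as a codimension-$2$ cycle on $E^4$. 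Expanding both in the basis of products of the elementary divisors $\Delta_{ij}$, $\Delta_{ij}^-$ and $P_i$ yields an explicit codimension-$2$ cycle $D \in Z^2(E^4)$ with $[D] = \beta\alpha - \mathrm{id}_{\wedge^2 V}$.

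The final and principal step is to produce $W_{\wedge^2 V}$ with $\partial W_{\wedge^2 V} = D$. For this I would apply Theorem~\ref{theta-theorem}, searching for integral linear forms $f_i,g_j$ on $E^4$ and positive integers $a_i,b_j$ satisfying the quadratic relation (\ref{condition for theta}) such that $\operatorname{div}\bigl(\prod\theta(f_i)^{a_i}/\prod\theta(g_j)^{b_j}\bigr)$ agrees with a rational multiple of $D$. A natural starting point is to build $W_{\wedge^2 V}$ from pullbacks of the cycle $W_{V12}$ of the preceding remark along the coordinate projections $E^4\to E^2$, since $\eta$ already factors through $\epsilon_{12}$ whose boundary defect has been trivialised by $W_{V12}$, together with further terms enforcing the $\tau^{\ast}$-symmetrisation. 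The hard part will be this last step: after the symmetrisation $D$ becomes a genuinely four-variable mixture of diagonals and fibres, and one must solve the quadratic constraint of Theorem~\ref{theta-theorem} simultaneously with the requirement that the resulting theta-ratio have divisor exactly $D$. Once such a rational function is exhibited, the translation back through the Hom/cycle dictionary to give $W_{\wedge^2 V}$ is routine.
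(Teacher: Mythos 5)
Your construction of $\alpha$ and $\beta$ is in the same spirit as the paper's (the paper takes $\alpha=\frac12(\Delta-\Delta^-)$ and $\beta=-\frac14(\Delta-\Delta^-)$, while you use $\delta=\Delta-P_1-P_2=-D_{12}$ and $-\frac12\delta$; these are cohomologous choices of the primitive class and both yield $\alpha\beta=1$ by the intersection computation you give). Your reduction of $\beta\alpha-\mathrm{id}_{\wedge^2 V}$ to an expression in products of the $D_{ij}$ also matches where the paper ends up. The genuine gap is the final step, which is the entire content of the proposition.

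First, as stated your plan has a codimension mismatch. You propose to find a theta ratio whose \emph{divisor} ``agrees with a rational multiple of $D$,'' but $D=\beta\alpha-\mathrm{id}_{\wedge^2 V}$ lies in $Z^2(E^4)$, i.e.\ is codimension two, whereas $\operatorname{div}$ of a rational function on $E^4$ is codimension one. A single invocation of Theorem~\ref{theta-theorem} cannot produce $W_{\wedge^2 V}\in Z^2(E^4,1)$; what is needed are products of one divisor with the graph of a rational function (elements $D_{ij}\cdot\Gamma_f$, etc.), and you never specify how those pieces are to be assembled. Second, and more substantively, the reduction of $D$ to an explicit boundary is not a single search over theta data but a staged argument. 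After expanding in the $D_{ij}$, the key identity to trivialize is $D_{12}D_{34}+D_{13}D_{24}+D_{14}D_{23}$. The paper handles this by (i) establishing and exponentiating the three-term relation $\partial\widetilde{W}_{Vijk}=P_iD_{jk}+D_{ij}D_{ik}$, obtained from the $\sigma$-twist of the known Chow-group identity together with the homotopy $W_{Vij}$ for $D_{ij}+D_{ij}^-$; (ii) pulling this relation back along the summation map $\varphi:(x_1,x_2,x_3,x_4)\mapsto(x_1,x_2-x_4,x_3)$, which requires new auxiliary theta ratios $f_{124}$, $f_{234}$ on $E^3$ to compare $\varphi^*D_{ij}$ with $D_{ij}-D_{ij'}$; and (iii) combining the resulting boundaries with the three-point relations again to isolate $\widetilde{W}_{V1234}$ with $\partial\widetilde{W}_{V1234}=D_{12}D_{34}+D_{13}D_{24}+D_{14}D_{23}$. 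Your proposal to ``build $W_{\wedge^2 V}$ from pullbacks of $W_{V12}$'' points vaguely in this direction, but none of steps (i)--(iii) appear, and the explicit theta functions that make the argument work are exactly what the proposition is asking you to exhibit. Until you produce the four-point cycle $\widetilde{W}_{V1234}$ (or an equivalent), the proof is incomplete.
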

\begin{proof}
In order to compute the composition law explicitly, we let $\beta$ be the algebraic cycle $-\frac{1}{4}(\Delta - \Delta^-) \in Z^1(E_1 \times E_2)$ and $\alpha$ be the algebraic cycle $\frac{1}{2}(\Delta - \Delta^-) \in Z^1(E_3 \times E_4)$, where $E_i (i = 1, \cdots, 4)$ are both the copy of $E$. Then we may compute $\alpha \beta$ firstly. This is done via the direct computation of intersection points $\Delta^-_{34} \cap \Delta_{12}$ and $\Delta_{34} \cap \Delta^-_{12}$, which are exactly 8 points. 
\

Next we can compute $\beta \alpha - id_{\wedge^2 V}$. By our definition, this algebraic cycle is:
\begin{equation} \nonumber
\begin{split}
& - \frac{1}{8}[(\Delta_{12} - \Delta^-_{12}) \times (\Delta_{34} - \Delta^-_{34})] - \frac{1}{8}[(\Delta_{13} - \Delta^-_{13}) \times (\Delta_{24} - \Delta^-_{24}) \\
&+ (\Delta_{14} - \Delta^-_{14}) \times (\Delta_{23} - \Delta^-_{23})] 
\end{split}
\end{equation}
Using these $D_{ij}$, then we have:
\begin{equation} \nonumber
\begin{split}
& \beta \alpha - id_{\wedge^2 V} = - \frac{1}{8}[(D_{12} - D^-_{12}) \times (D_{34} - D^-_{34})] \\
 &- \frac{1}{8}[(D_{13} - D^-_{13}) \times (D_{24} - D^-_{24}) 
+ (D_{14} - D^-_{14}) \times (D_{23} - D^-_{23})] \\
=&  - \frac{1}{8}[(D_{12} - (id \times \sigma)^*(D_{12})) \times (D_{34} -  (id \times \sigma)^*D_{34})] \\
 &- \frac{1}{8}[(D_{13} -  (id \times \sigma)^*D_{13}) \times (D_{24} -  (id \times \sigma)^*D_{24}) \\
&+ (D_{14} -  (id \times \sigma)^*D_{14}) \times (D_{23} -  (id \times \sigma)^*D_{23})] \\
= & -\frac{1}{8}[(2D_{12} + \partial W_{V12}) \times (2D_{34} + \partial W_{V34}) \\ & +(2D_{13} 
+ \partial W_{V13}) \times (2D_{24} + \partial W_{V24}) \\
&+(2D_{14} + \partial W_{V14}) \times (2D_{23} + \partial W_{V23}) ] \\
=& -\frac{1}{2}[D_{12} \times D_{34} + D_{13} \times D_{24}+D_{14} \times D_{23}] + \partial \widetilde{W}_1.
\end{split}
\end{equation}
Lemma 4.2 in \cite{KT} shows that $D_{12} \times D_{34} + D_{13} \times D_{24}+D_{14} \times D_{23} = 0$ in $CH^2(E^4)$ and in the following we'd like to find out the explicit boundary.
\

Via the proof of Lemma 4.2 in loc.cit, the following identities between algebraic cycles in the Chow group hold:
\begin{itemize} 
\item
$P_iD_{ij} = 0;$
\item
$-P_2D_{13} +D_{12}D_{13} = - P_1D_{23} + D_{12}D_{23}.$
\end{itemize}
Applying the automorphism $(x_1, x_2,x_3) \to (x_1, -x_2, x_3)$ to the last identity, we get:
\[-\sigma_2^*(P_2)\sigma_2^*(D_{13}) +\sigma_2^*(D_{12})\sigma_2^*(D_{13}) = - \sigma_2^*(P_1)\sigma_2^*(D_{23}) + \sigma_2^*(D_{12})\sigma_2^*(D_{23}).\] Hence we have:
\begin{equation}
\begin{split}
&-P_2D_{13} - (D_{12} + \partial W_{V12}) D_{13} \\
= & P_1(D_{23} + \partial W_{V23})+(D_{12} + \partial W_{V12})(D_{23} + \partial W_{V23}).
\end{split}
\end{equation}
In other words, there exists an algebraic cycle $\widetilde{W}_2$ such that:
\begin{equation} \nonumber
\begin{split}
-P_2D_{13} - D_{12} D_{13} 
=  P_1D_{23}+D_{12}D_{23} - \partial \widetilde{W}_2.
\end{split}
\end{equation}
Therefore we get 
\begin{equation} \label{cycle2}
P_2D_{13} = - D_{12}D_{23} +  \frac{1}{2}\partial \widetilde{W}_2.
\end{equation}
For the later use, we denote the algebraic cycle, whose boundary is $P_1 D_{23} + D_{12}D_{13}$ by $\widetilde{W}_{V123}$, i.e.,
\begin{equation}\label{V123}
\partial \widetilde{W}_{V123} = P_1 D_{23} + D_{12}D_{13}.
\end{equation}
We consider the map $\varphi: E_1 \times E_2 \times E_3 \times E_4 \to E_1 \times E_2 \times E_3$, which is defined by
$(x_1, x_2,x_3, x_4) \to (x_1, x_2 - x_4,x_3)$. Applying $\varphi$ to the equality (\ref{cycle2}), we have
\begin{equation}\label{eq}
\varphi^*(P_2) \varphi^* (D_{13}) = - \varphi^* (D_{12}) \varphi^* (D_{23}) +  \varphi^* \frac{1}{2}\partial \widetilde{W}_2.
\end{equation}
Note that $\varphi^*(P_2) = P_2+P_4 -D_{42}, \varphi^* (D_{13}) = D_{13}$.
\

Claim: There exists an algebraic cycle $W_{V124}$ such that 
\[
\varphi^*(D_{12}) = D_{12} - D_{14} + \partial W_{V124}.
\]
Consider the morphism $\varphi: E_1 \times E_2 \times E_4 \to E_1 \times E_2$ defined by $(x_1, x_2,x_4) \to (x_1, x_2 - x_4)$. So $\varphi^*(\Delta_{12}) = \{(x_1, x_2,x_4) \in  E_1 \times E_2 \times E_4 \mid x_1 - x_2 +x_4 = 0\}$. Then we consider the function
\[
f_{124} = \frac{\theta(x_1-x_2)\theta(x_2-x_4)\theta(x_1)\theta(x_4)}{\theta(x_1-x_2+x_4) \theta(x_1-x_4)\theta(x_2)}
\]
on $(x_1, x_2,x_4) \in  E_1 \times E_2 \times E_4$, which is  a rational function via Theorem \ref{theta-theorem}. The boudary of its graph is just:
\[
 (\Delta_{12} + \Delta_{24} +P_1+P_4) - (\varphi^*(\Delta_{12}) + \Delta_{14} +P_2). 
\] 
Rearranging these terms, it is the same as
\[
\varphi^*(D_{12}) - D_{12} + D_{14}. 
\]
Hence we may take the graph of the function $\pi_{124}^*f_{124}$ as the algebraic cycle $W_{V124}$, where $\pi_{124}: E_1 \times E_2 \times E_3 \times E_4 \to E_1 \times E_2 \times E_4$.
\

Similarly there exists an algebraic cycle $W_{V234}$ such that 
\[
\varphi^*(D_{23}) = D_{23} - D_{34} + \partial W_{V234}.
\]
Hence equality (\ref{eq}) implies that:
\begin{equation} \nonumber
\begin{split}
& \varphi^*(P_2)  \varphi^* (D_{13}) = (P_2+P_4 -D_{42}) D_{13} \\
= & - \varphi^* (D_{12}) \varphi^* (D_{23}) +  \varphi^* (\frac{1}{2}\partial \widetilde{W}_2) \\
=& -(D_{12} - D_{14} + \partial W_{V124})(D_{23} - D_{34} + \partial W_{V234}) + \frac{1}{2}\partial  \varphi^*\widetilde{W}_2.
\end{split}
\end{equation}
This above equation can be simplified as
\begin{equation} \nonumber
(P_2 + P_4 -D_{42})D_{13} = - (D_{12} - D_{14})(D_{23}-D_{34}) + \partial \widetilde{W}_3,
\end{equation}
Combining with $P_2D_{13} = -D_{12}D_{23} + \frac{1}{2} \partial \widetilde{W}_2$ and $P_4D_{13} = -D_{14}D_{43} + \frac{1}{2} \partial \widetilde{W}_4$, we find an algebric cycle $\widetilde{W}_{V1234}$ such that
\begin{equation} \label{V1234}
\partial \widetilde{W}_{V1234} = D_{12} \times D_{34} + D_{13} \times D_{24}+D_{14} \times D_{23} = 0.
\end{equation}
Hence there exists an algebraic cycle $W_{\wedge^2 V}$ such that
\begin{equation} \nonumber
\partial W_{\wedge^2 V} = -\frac{1}{2}[D_{12} \times D_{34} + D_{13} \times D_{24}+D_{14} \times D_{23}] + \partial \widetilde{W}_1 = \beta \alpha - id_{\wedge^2 V}.
\end{equation}
\end{proof}

\begin{corollary}
For $i \in \mathbb{Z}, i \geq 3$, we have a decomposition 
\[
\wedge^i V \cong  0.
\]
More precisely, there exists an algebraic cycle $W_{\wedge^i V}$ such that $\partial W_{\wedge^i V} = id_{\wedge^i V}.$
\end{corollary}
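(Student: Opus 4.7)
My plan is to argue by induction on $i\ge 3$, with the inductive step being essentially formal and the real content concentrated in the base case. For the inductive step $i\Rightarrow i+1$, I use that $\wedge^{i+1}V$ is a direct summand of $\wedge^iV\otimes V$ via closed degree-$0$ morphisms $\iota\colon\wedge^{i+1}V\hookrightarrow\wedge^iV\otimes V$ and $p\colon\wedge^iV\otimes V\twoheadrightarrow\wedge^{i+1}V$ with $p\iota=\operatorname{id}_{\wedge^{i+1}V}$, coming from the standard antisymmetrization. Assuming inductively $\operatorname{id}_{\wedge^iV}=\partial W_{\wedge^iV}$, I set $W_{\wedge^{i+1}V}:=p\circ(W_{\wedge^iV}\otimes\operatorname{id}_V)\circ\iota$; since $p$, $\iota$ and $\operatorname{id}_V$ are all closed, this yields
\[
\partial W_{\wedge^{i+1}V}=p\circ(\partial W_{\wedge^iV}\otimes\operatorname{id}_V)\circ\iota=p\iota=\operatorname{id}_{\wedge^{i+1}V}.
\]

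For the base case $i=3$, I embed $\wedge^3V$ as a summand of $\wedge^2V\otimes V$ and plug in $\wedge^2V\cong\mathbb{Q}(-1)$ from the previous proposition, written as $\operatorname{id}_{\wedge^2V}=\beta\alpha-\partial W_{\wedge^2V}$. Substitution gives
\[
\operatorname{id}_{\wedge^3V}=p\circ((\beta\alpha)\otimes\operatorname{id}_V)\circ\iota-\partial\bigl(p\circ(W_{\wedge^2V}\otimes\operatorname{id}_V)\circ\iota\bigr),
\]
and since the first term factors through $\mathbb{Q}(-1)\otimes V\cong V(-1)$ via the map $(\alpha\otimes\operatorname{id}_V)\circ\iota\colon\wedge^3V\to V(-1)$, it suffices to show this map is null-homotopic. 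Classically this morphism is the rank-$2$ linear dependence map
\[
v_1\wedge v_2\wedge v_3\mapsto\det(v_2,v_3)v_1-\det(v_1,v_3)v_2+\det(v_1,v_2)v_3,
\]
which vanishes identically precisely because $V$ has motivic rank $2$.

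The main obstacle is promoting this classical identity to an explicit motivic null-homotopy. I would construct the required bounding cycle by applying Theorem~\ref{theta-theorem} to a three-variable product of $\theta$-quotients adapted to the above dependence relation and reading off its divisor, working in the same spirit as the cycles $W_{\wedge^2V}$, $\widetilde{W}_{V123}$, $W_{V124}$ and $\widetilde{W}_{V1234}$ built in the preceding proof. Combined with the explicit homotopy displayed above, this produces $W_{\wedge^3V}$, after which the induction propagates the conclusion to all $i\ge 3$.
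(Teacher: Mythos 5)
Your inductive step is correct and in fact cleaner than the paper's: expressing $\wedge^{i+1}V$ as a summand of $\wedge^iV\otimes V$ via closed maps $\iota,p$ with $p\iota=\operatorname{id}$, and setting $W_{\wedge^{i+1}V}=p\circ(W_{\wedge^iV}\otimes\operatorname{id}_V)\circ\iota$, gives the boundary identity formally. The paper carries out essentially the same reduction at the cycle level, partitioning the sum $\sum_s D_{1s(1)}\cdots D_{(n+1)s(n+1)}$ by the value of $s(n+1)$, which is just your factorization written out explicitly.

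The genuine gap is in the base case $i=3$. Your reduction to showing $(\alpha\otimes\operatorname{id}_V)\circ\iota\colon\wedge^3V\to V(-1)$ is null-homotopic is correct in principle, but you never actually produce the bounding cycle; you only announce a plan to build ``a three-variable product of $\theta$-quotients adapted to the dependence relation.'' That explicit cycle construction is the entire content of the corollary, since the abstract vanishing of $\wedge^3V$ is trivial at the level of realizations. Moreover, the plan is unnecessary: no new $\theta$-function computation is required, because the cycles $\widetilde W_{Vijkl}$ already built in Proposition~2.4, with $\partial\widetilde W_{Vijkl}=D_{ij}D_{kl}+D_{ik}D_{jl}+D_{il}D_{jk}$, do the job. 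The paper writes $\operatorname{id}_{\wedge^3V}=\sum_s D_{1s(1)}D_{2s(2)}D_{3s(3)}$, pairs the six terms so that each pair equals $-D_{12}D_{ab}D_{cd}+\partial\widetilde W_{V12ab}\,D_{cd}$, and then recognizes the leftover $-D_{12}(D_{45}D_{36}+D_{46}D_{35}+D_{56}D_{34})$ as $-D_{12}\,\partial\widetilde W_{V3456}$. You would tighten your argument considerably by recognizing that the bounding homotopy you seek is precisely a reorganization of these $\widetilde W_{Vijkl}$, rather than a fresh object to be constructed from scratch.
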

\begin{proof}
Assume that $i = 3$ first. According to the definition of $\wedge^3 V$, we have:
\[
id_{\wedge^3 V} = \sum_{s} D_{1s(1)} \times D_{2s(2)} \times D_{3s(3)},
\] 
where the index set runs over all bijections $s: \{1,2,3\} \to \{4,5,6\}$.
\

Using the results in the previous proof, we find that
\[
D_{14} \times D_{25} \times D_{36} + D_{15} \times D_{24} \times D_{36} = -D_{12} \times D_{45} \times D_{36} + \partial W_{V1245} \times D_{36};
\]
\[
D_{14} \times D_{26} \times D_{35} + D_{16} \times D_{24} \times D_{35} = -D_{12} \times D_{46} \times D_{36} + \partial W_{V1246}\times D_{35};
\]
\[
D_{15} \times D_{26} \times D_{34} + D_{16} \times D_{25} \times D_{34} = -D_{12} \times D_{56} \times D_{34} + \partial W_{V1256} \times D_{34}.
\]
Summing up the above three identities, we get that:
\begin{equation} \nonumber
\begin{split}
& id_{\wedge^3 V} = -D_{12} \times D_{45} \times D_{36} + \partial W_{V1245} \times D_{36} -D_{12} \times D_{46} \times D_{36} \\
& + \partial W_{V1246}\times D_{35} -D_{12} \times D_{56} \times D_{34} + \partial W_{V1256} \times D_{34} \\
& = \partial W_{V1245}\times D_{36} + \partial W_{V1246}\times D_{35} + \partial W_{V1256}\times D_{34} - D_{12}\times \partial W_{V3456}.
\end{split}
\end{equation}
If $i > 3$, one may proceed by induction. Assume $i = n$ and there exists an algebraic cycle such that $\partial W_{\wedge^n V} = id_{\wedge^n V}$.  Then for $i = n+1$, we have:
\[
id_{\wedge^{n+1} V} = \sum_{s} D_{1s(1)} \times D_{2s(2)} \cdots \times D_{(n+1)s(n+1)},
\] 
where the index set runs over all bijections $s: \{1,2,\cdots, n+1\} \to \{n+2,\cdots,2(n+1)\}$. Then we may divide $\sum_{s} D_{1s(1)} \times D_{2s(2)} \cdots \times D_{(n+1)s(n+1)}$ into subsets:
$
I_1 = \{s | s(n+1) = n+2\}; I_2 = \{s | s(n+1) = n+3\}; \cdots; I_{n+1} = \{s | s(n+1) = 2n+2\}
$. By induction, for each $j \in \{1, \cdots, n+1\}$, there exists an algebraic cycles $W_j$ such that
\[
\partial W_j = \sum_{s \in I_j} D_{1s(1)} \times D_{2s(2)} \cdots \times D_{(n+1)s(n+1)}.
\]
Therefore we have
\[
\partial \sum_{j = 1}^{n+1}  W_j = \sum_{j = 1}^{n+1} \partial W_j = \sum_{s} D_{1s(1)} \times D_{2s(2)} \cdots \times D_{(n+1)s(n+1)}.
\]
\end{proof}

\section{Linear correspondences and homotopy}
In this section, we want to generalize the idea of finding coboundaries for linear cycles as in the previous section.
\

Let us recall the definition of linear cycles. 
\begin{definition}
Consider the $2n$-th product of elliptic curves
$$
E^n\times E^n=E^{2n}=\{(x_1, \dots, x_{2n}) \mid x_i \in E\}.
$$
The subspace of $Z^n(E^{n} \times E^n)$ generated by
codimension $n$ subvarieties which can be expressed as
intersections of divisors of the following type:
\begin{align*}
&P_i=\{x_i=0\}\quad(i=1,\dots, 2n)
\\
&\Delta_{ij}=\{x_i=x_j\}\quad (1\leq i<j\leq 2n) 
\\
&\Delta^-_{ij}=\{x_i=-x_j\}\quad (1\leq i<j\leq 2n) 
\end{align*}
is  denoted by $LZ^n(E^{2n})=LZ^n(E^{2n},0)$. Its elements are called the linear cycles of $E$.
\end{definition}
Recall that $D_{ij} = P_i+P_j - \Delta_{ij}$.
\begin{definition}
Let $S\subset LZ^n(E^{2n},0)$ be the subspace of 
$LZ^n(E^{2n},0)$ generated by elements
\[
P_{i_1}\cdots P_{i_p} D_{j_1k_1} \cdots D_{j_{q}k_{q}}
\]
such that
\begin{enumerate}
\item 
$i_1, \cdots, i_p, j_1, \cdots, j_q, k_1, \cdots, k_q$ are distinct, $p+q=n$ and
\item{(Gelfand-Zetlin condition)}
\[
j_1 < j_2 < \cdots < j_q, k_1 < k_2 < \cdots < k_q, j_i < k_i(i = 1, \cdots, q).
\]
\end{enumerate}
\end{definition}
\begin{definition}
We define a subspace $T$ of $LZ^n(E^{2n}, 0)$ generated
(as for the meaning of generated, see below)
by the following elements.
\begin{enumerate}
\item
$P_i D_{jk} + D_{ij} D_{ik}$,
where $\{i,j,k\}$ is a subset of $\{1, \dots, 2n\}$ such that $\#I=3$.
\item
$D_{ij}D_{kl} + D_{ik} D_{jl}+D_{il}D_{jk}$,
where $\{i,j,k,l\}$ is a subset of $\{1, \dots, 2n\}$ such that $\#I=4$.
\end{enumerate}
\end{definition}

\begin{lemma}
We have a decomposition:
$LZ^{n}(E^{2n}, 0) = S \oplus T.$
\end{lemma}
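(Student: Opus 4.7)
The plan is to establish $LZ^n = S + T$ and $S \cap T = 0$ separately, following the pattern of a Plücker-style straightening law. Expressing $\Delta_{ij} = P_i + P_j - D_{ij}$, and using the previously observed identity $D^{-}_{ij} \equiv -D_{ij}$ modulo the coboundary of $W_{V12}$ (so that $\Delta^{-}_{ij}$-contributions can be traded for a combination of $P$'s and $D_{ij}$'s modulo boundaries absorbed into $T$), every generator of $LZ^n$ can be written as a monomial $m = P_{i_1}\cdots P_{i_p}\cdot D_{j_1 k_1}\cdots D_{j_q k_q}$ with $p + q = n$. It therefore suffices to normalize such monomials.

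For the spanning statement I would run a two-stage reduction. Stage one imposes condition (1) of the definition of $S$: whenever two $D$-factors share an index, say $D_{ij} D_{ik}$ with $\{i,j,k\}$ distinct, the first generator of $T$ gives $D_{ij}D_{ik} \equiv -P_i D_{jk} \pmod{T}$, strictly decreasing the number of $D$-factors; iterating produces a monomial in which all indices appearing on $D$-factors are distinct and disjoint from the $P$-indices. Stage two imposes the Gelfand--Zetlin condition (2). Using $D_{jk} = D_{kj}$, flip each pair so $j_\alpha < k_\alpha$, and permute factors so $j_1 < \cdots < j_q$. If an inversion $k_i > k_{i+1}$ remains, the four-term generator of $T$ yields
\[
D_{j_i k_i}D_{j_{i+1}k_{i+1}} \equiv -D_{j_i j_{i+1}}D_{k_i k_{i+1}} - D_{j_i k_{i+1}}D_{j_{i+1}k_i} \pmod{T}.
\]
Renormalizing each resulting monomial (forcing $j_\alpha < k_\alpha$ and the $j$-entries increasing), one checks that the lexicographic rank of $(k_1, \dots, k_q)$ strictly drops, so the procedure terminates. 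This is the classical Plücker straightening.

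For the direct-sum statement, I would construct a retraction $\pi \colon LZ^n \to S$ with $\pi|_S = \mathrm{id}_S$ and $\pi(T) = 0$; equivalently, I would verify that the rewriting system of the previous paragraph is confluent in the sense of Bergman's Diamond Lemma. Concretely one must check that all critical pairs --- overlaps of two relation generators acting on a common monomial --- resolve to the same normal form. The critical pairs split into three families: overlaps of two three-term generators (on four indices), Plücker overlaps of two four-term generators (on five indices), and mixed three-term/four-term overlaps. Each resolvability is a finite combinatorial verification, close in spirit to the explicit boundary computations for $\widetilde{W}_{V123}$ and $\widetilde{W}_{V1234}$ carried out in the preceding section.

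The main obstacle will be the confluence check at the five-index Plücker overlap, which carries the Arnold/Orlik--Solomon content of the relation and is the place where careless manipulation can produce non-terminating cycles instead of a unique normal form. Termination itself is cheap once the lexicographic measure on $(k_1, \dots, k_q)$ is in place; the genuine combinatorial content is the order-independence of the normal form, i.e.\ the linear independence of Gelfand--Zetlin monomials modulo $T$. Once this confluence is established, $\pi$ is well defined, and $S \cap T = 0$ follows immediately, completing the decomposition $LZ^n(E^{2n},0) = S \oplus T$.
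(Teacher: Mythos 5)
Your strategy for the spanning statement $LZ^n = S + T$ is broadly the right straightening argument (and is essentially what the paper outsources to Lemma~4.3 of \cite{KT}), but the way you dispose of the $D^{-}_{ij}$-factors is incorrect as written. You invoke $D^-_{ij} \equiv -D_{ij}$ modulo $\partial W_{V_{ij}}$ and claim these boundaries are ``absorbed into $T$.'' They are not: $T$ is a subspace of $LZ^n(E^{2n},0)$ spanned by (products with) $P_iD_{jk}+D_{ij}D_{ik}$ and the three-term Pl\"ucker elements, and $\partial W_{V_{ij}} = -D_{ij}-D^-_{ij}$ is not a linear combination of those. Boundaries like $\partial W_{V_{ij}}$ only become usable once you enlarge the target to $S\oplus LZ^n(E^{2n},1)$, which is what the \emph{next} proposition does; at the level of the present lemma, which is a statement about a direct-sum of subspaces of $Z^n(E^{2n},0)$, this step needs a different justification.

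For $S\cap T=0$ your route diverges sharply from the paper's, and the divergence is exactly where the gap sits. You propose to establish confluence of the straightening rewriting system (Bergman's Diamond Lemma), which would indeed give a retraction $\pi\colon LZ^n\to S$ and hence the direct sum. But this is a purely combinatorial reproof of the linear independence of Gelfand--Zetlin monomials modulo the Arnold/Orlik--Solomon relations, and you yourself flag the five-index Pl\"ucker overlap as the hard critical pair without resolving it. As it stands this is not a proof; the crucial verification is promissory. The paper's argument is both shorter and avoids the combinatorics entirely: it observes that the image of $T$ in $CH^n(E^{2n})$ vanishes (the relations of \cite{KT}, Lemma~4.2, hold modulo rational equivalence), and that the $S$-generators have linearly independent classes in $CH^n(E^{2n})$ --- a fact imported from \cite{KT} Lemma~4.4, Corollary~4.5, ultimately resting on the injectivity of the linear Chow group into endomorphisms of \'etale cohomology \cite[Cor.~5.3.3]{BL}. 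So any $z\in S\cap T$, written in the $S$-basis, has all coefficients killed in the Chow group and hence vanishes. The trade-off is instructive: the paper buys the independence with geometric/cohomological input, while your route would, if completed, give an elementary combinatorial proof --- but the completion is precisely the nontrivial content and is missing.
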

\begin{proof}
From the proof of Lemma 4.3 of \cite{KT}, we have that $LZ^{n}(E^{2n}, 0) = S + T.$ 
\

Next we want to show that $S \cap T = \emptyset.$ Given any cycle $Z \in S \cap T$, we may assume that
\[
Z = \sum \alpha_{i_1, \cdots, i_p,j_1, 
\cdots, j_q,k_1, \cdots, k_q} P_{i_1}\cdots P_{i_p} D_{j_1k_1} \cdots D_{j_{q}k_{q}},
\]
where $\alpha_{i_1, \cdots, i_p,j_1, 
\cdots, j_q,k_1, \cdots, k_q}  \in \mathbb{Z}$, short for $\alpha_{i, j, k}$. Then we consider the image of $Z$ in $CH^n(E^{2n})$. Via Lemma 4.2 of \cite{KT}, we have $Z = 0 \in CH^n(E^{2n})$. Note that $P_{i_1} \cdots P_{i_p} D_{j_1k_1} \cdots D_{j_{q}k_{q}}$ are linearly independent in $CH^n(E^{2n})$. See Lemma 4.4, Corollary 4.5 of \cite{KT}. Hence $\alpha_{i,j,k} = 0$.
\end{proof}
We denote the projection of $LZ^n(E^{2n}, 0) $ to each direct summand $S$ 
(resp. $T$) by $\pi_S$(resp. $\pi_T$).

\begin{definition}
Let $I=\{i,j\}$ be a subset of $\{1, \dots, 2n\}$ such that $\#I=2$
and set
\begin{align*}
\varphi_I:Z^1(E^2,1)\times LZ^{n-1}(E^{2n-2},0)&\to
Z^{n}(E^{2n},1):
\\
(y_1,y_2)\times (x_1,\dots,x_{2n-2})&\mapsto
(x_1,\dots,\overset{i}{y_1},\dots,\overset{j}{y_2},\dots,x_{2n-2})
\end{align*}
Similarly, we let $I=\{i,j,k\}$ be a subset of $\{1, \dots, 2n\}$ such that $\#I=3$
and set
\begin{align*}
\varphi_I:Z^1(E^3,1)\times LZ^{n-1}(E^{2n-3},0)&\to
Z^{n}(E^{2n},1):
\\
(y_1,y_2,y_3)\times (x_1,\dots,x_{2n-3})&\mapsto
(x_1,\dots,\overset{i}{y_1},\dots,\overset{j}{y_2},\dots,\overset{k}{y_3}, \dots, x_{2n-3})
\end{align*}
Let $I=\{i,j,k,l\}$ be a subset of $\{1, \dots, 2n\}$ such that $\#I=4$
and set
\begin{align*}
\varphi_I: &Z^1(E^4,1)\times LZ^{n-1}(E^{2n-4},0)\to
Z^{n}(E^{2n},1):
\\
&(y_1,y_2,y_3,y_4)\times (x_1,\dots,x_{2n-4})\mapsto
\\
&(x_1,\dots,\overset{i}{y_1},\dots,\overset{j}{y_2},\dots,\overset{k}{y_3},\dots,\overset{l}{y_4}\dots,x_{2n-4})
\end{align*}
We define $LZ^n(E^{2n},1)$ as the subspace generated by elements of the form
$$
\varphi_I(Z(\theta)\times \gamma)
$$
where $Z(\theta)$ is an element in $Z^{1}(E^{2},1)$ (resp. $Z^{1}(E^{3},1), Z^{1}(E^{4},1)$)
defined by a combination of theta functions as in Theorem \ref{theta-theorem} and $\gamma$ is an element in
$LZ^{n-1}(E^{2n-2},0)$ (resp $LZ^{n-1}(E^{2n-3},0), LZ^{n-1}(E^{2n-4},0)$).
\end{definition}

\begin{proposition} \label{dec of linear cycles}
There is a map
$$
LZ^n(E^{2n},0)\xrightarrow{(\pi_S, h)} S\oplus LZ^n(E^{2n},1)
$$
such that
$$
\gamma-\pi_S(\gamma)=\partial(h(\gamma))
$$
\end{proposition}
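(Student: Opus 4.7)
The plan is to combine the direct-sum decomposition $LZ^n(E^{2n},0) = S \oplus T$ of the previous lemma with the explicit theta-function bounding cycles produced in the previous section. For any $\gamma \in LZ^n(E^{2n},0)$ we write $\gamma = \pi_S(\gamma) + \pi_T(\gamma)$, so that $\gamma - \pi_S(\gamma) = \pi_T(\gamma) \in T$. It therefore suffices to construct a linear right inverse $h_T : T \to LZ^n(E^{2n}, 1)$ to the boundary operator (i.e.\ $\partial \circ h_T = \mathrm{id}_T$), and then set $h := h_T \circ \pi_T$.

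By construction, $T$ is spanned by elements of the form $\varphi_I(\rho_I \times \mu)$, where $\mu$ is a monomial linear cycle supported on the indices outside $I$, and $\rho_I$ is either the three-term relation $P_iD_{jk}+D_{ij}D_{ik}$ (when $|I|=3$) or the four-term relation $D_{ij}D_{kl}+D_{ik}D_{jl}+D_{il}D_{jk}$ (when $|I|=4$). The crucial inputs are the cycles $\widetilde{W}_{V123}$ and $\widetilde{W}_{V1234}$ produced in equations (\ref{V123}) and (\ref{V1234}), whose boundaries realize exactly these two relations on $E^3$ and $E^4$. Both are assembled from graphs of theta-ratios permitted by Theorem~\ref{theta-theorem} (possibly combined with linear divisors), so the insertions $\varphi_I(\widetilde{W}_{V_I} \times \mu)$ lie in $LZ^n(E^{2n},1)$. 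We set
\[
h_T\bigl(\varphi_I(\rho_I \times \mu)\bigr) := \varphi_I\bigl(\widetilde{W}_{V_I} \times \mu\bigr),
\]
where $\widetilde{W}_{V_I}$ denotes the appropriate cycle relabeled via a chosen bijection between $\{1,2,3\}$ (resp.\ $\{1,2,3,4\}$) and $I$. Since $\partial\mu=0$, the Leibniz rule for $\partial$ and $\varphi_I$ immediately gives $\partial h_T(\varphi_I(\rho_I \times \mu)) = \varphi_I(\rho_I \times \mu)$.

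The main obstacle will be the well-definedness of $h_T$ as a linear map on all of $T$: the generating set $\{\varphi_I(\rho_I\times\mu)\}$ is highly redundant, and two distinct presentations of the same element of $T$ could a priori be assigned different bounding cycles. To circumvent this we fix once and for all a basis of $T$ consisting of generators of the above form, leveraging the linear independence in $CH^n(E^{2n})$ of the standard monomials $P_{i_1}\cdots P_{i_p}D_{j_1k_1}\cdots D_{j_qk_q}$ used in the previous lemma to establish $S\cap T = 0$; we then define $h_T$ on this basis by the displayed formula and extend by linearity. The identity $\partial h_T = \mathrm{id}_T$ holds on basis elements by (\ref{V123}) and (\ref{V1234}), hence on all of $T$. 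Composing with $\pi_T$ yields the required map $h$ in the statement.
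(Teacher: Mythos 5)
Your proof is correct and follows essentially the same route as the paper: decompose via $LZ^n(E^{2n},0)=S\oplus T$, then bound elements of $T$ using the theta-cycles $\widetilde{W}_{Vijk}$, $\widetilde{W}_{Vijkl}$ inserted via $\varphi_I$. Where you fix a basis of $T$ among the generators to handle well-definedness, the paper packages the same idea by noting that $\partial$ maps the span $\widetilde{T}$ of these insertions surjectively onto $T$ and then simply chooses a linear section $\partial^{-1}$; the two formulations are equivalent.
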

\begin{proof}
Recall in the previous section (see (\ref{V123}), (\ref{V1234})), we have find algebraic cycles 
$\widetilde{W}_{Vijk}$ and $\widetilde{W}_{Vijkl}$, lying in 
$Z^2(E^{3}, 1)$ and $Z^2(E^{4}, 1)$, whose boundaries are $P_i D_{jk} + D_{ij} D_{ik}$ 
and $D_{ij}D_{kl} + D_{ik} D_{jl}+D_{il}D_{jk}$ respectively. Hence if we let $\widetilde{T}$ be the subspace of $Z^n(E^{2n}, 1)$ 
generated by the elements:
\begin{enumerate}
\item
$\widetilde{W}_{Vijk}$,
where $\{i,j,k\}$ is a subset of $\{1, \dots, 2n\}$ such that $\#I=3$.
\item
$\widetilde{W}_{Vijkl}$,
where $\{i,j,k,l\}$ is a subset of $\{1, \dots, 2n\}$ such that $\#I=4$, 
\end{enumerate}
then the boundary map $\partial: Z^n(E^{2n}, 1) \to Z^n(E^{2n}, 0)$ induces 
a surjective map $\widetilde{T} \xrightarrow{\partial} T$. 
We choose a section $\partial^{-1}$ of the surjecive map $\partial$.
Now we define a map
$LZ^n(E^{2n},0)\xrightarrow{(\pi_S, h)} S\oplus LZ^n(E^{2n},1)$ by
\begin{equation}
\begin{split}
LZ^n(E^{2n},0) \xrightarrow{(\pi_S, \pi_T)} S \oplus T \xrightarrow{(id, \partial^{-1})} S \oplus \widetilde{T} \to S\oplus LZ^n(E^{2n},1).
\end{split}
\end{equation}
It will satisfy $\gamma-\pi_S(\gamma)=\partial(h(\gamma))$ for $\gamma \in LZ^n(E^{2n},0)$.
\end{proof}
We consider the realization homomorphism
\begin{equation}
\begin{split}
Z^n(E^n\times E^n)
&\to 
\underline{\Hom}^0(\mathbf{R}\Gamma_M(E^n),\mathbf{R}\Gamma_M(E^n)) \\
&\to
\Hom(H^*_B(E^n, \mathbb{Q}),H^*_B(E^n, \mathbb{Q})),
\end{split}
\end{equation}

denoted it by $r$.
\begin{corollary}
Given any algebraic cycle $\gamma \in LZ^n(E^{2n})$ such that $r(\gamma) = 0$, then there exists an algebraic cycle $W_{\gamma} \in LZ^n(E^{2n}, 1)$ such that
$\partial W_{\gamma} = \gamma$.
\end{corollary}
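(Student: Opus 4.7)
The plan is to combine Proposition \ref{dec of linear cycles} with injectivity of the realization map $r$ restricted to the direct summand $S$. First, applying the proposition I would write
$$
\gamma = \pi_S(\gamma) + \partial h(\gamma), \qquad h(\gamma) \in LZ^n(E^{2n},1).
$$
The crucial point is that $r$ factors through the rational Chow group $CH^n(E^n\times E^n)$, since $\underline{\Hom}^0(\mathbf{R}\Gamma_M(E^n),\mathbf{R}\Gamma_M(E^n)) = Z^n(E^n\times E^n,0)$ and the image of $\partial: Z^n(E^n\times E^n,1)\to Z^n(E^n\times E^n,0)$ is exactly the subgroup of rationally trivial cycles. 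Consequently $r(\partial h(\gamma)) = 0$, and combined with the assumption $r(\gamma)=0$ this forces $r(\pi_S(\gamma)) = 0$.

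Next I would argue that $r|_S$ is injective, which then yields $\pi_S(\gamma)=0$. By Lemma 4.4 and Corollary 4.5 of \cite{KT} the Gelfand--Zetlin monomials $P_{i_1}\cdots P_{i_p} D_{j_1k_1}\cdots D_{j_qk_q}$ are linearly independent in $CH^n(E^{2n})$, and I would promote this to linear independence of their images in $H^{2n}_B(E^n\times E^n,\mathbb{Q})$ by a direct K\"unneth computation: the class of $P_i$ lies in the $H^2$-factor attached to $E_i$, while the class of $D_{ij}$ lies in the $H^1(E_i)\otimes H^1(E_j)$-factor (precisely the primitive component of the diagonal class). Distinct GZ patterns therefore occupy distinct K\"unneth summands of $H^{2n}_B(E^{2n},\mathbb{Q}) \cong \Hom(H^*_B(E^n),H^*_B(E^n))$, so $r|_S$ is injective, $\pi_S(\gamma) = 0$, and consequently $\gamma = \partial h(\gamma)$. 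Setting $W_\gamma := h(\gamma)$ completes the proof, since by construction $h(\gamma)$ lies in $LZ^n(E^{2n},1)$.

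The main technical point is the K\"unneth bookkeeping underlying injectivity of $r|_S$. Although no new geometric input beyond \cite{KT} is required, one must identify for each Gelfand--Zetlin monomial the precise Hom-factor it represents and verify that varying the set $\{i_1,\dots,i_p\}$ and the matching $\{(j_r,k_r)\}$ produces pairwise distinct factors. This reduces to a combinatorial index-matching argument once a symplectic basis of $H^1(E)$ is fixed, and should present no obstacle beyond careful notation.
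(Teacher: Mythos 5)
Your overall structure matches the paper's proof: decompose $\gamma=\pi_S(\gamma)+\partial h(\gamma)$, observe $r$ kills boundaries (since it factors through $CH^n$), deduce $r(\pi_S(\gamma))=0$, and then use injectivity of $r|_S$ to conclude $\pi_S(\gamma)=0$. What you must supply is that last injectivity, and here the argument you sketch has a genuine gap.

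You claim that distinct Gelfand--Zetlin monomials ``occupy distinct K\"unneth summands'' of $H^{2n}_B(E^{2n},\mathbb{Q})$, so that linear independence in cohomology is immediate. This is false. A K\"unneth summand is determined only by the \emph{sets} of indices carrying $P$'s and the set of indices carrying $D$'s, not by the matching. Already for $n=2$, the GZ monomials $D_{12}D_{34}$ and $D_{13}D_{24}$ both land in the single K\"unneth component $H^1(E_1)\otimes H^1(E_2)\otimes H^1(E_3)\otimes H^1(E_4)$; they are linearly independent, but that is precisely because of the nontrivial relation $D_{12}D_{34}+D_{13}D_{24}+D_{14}D_{23}=0$ (which is one of the generators of $T$), not because they sit in different summands. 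So the injectivity of $r|_S$ is not a matter of ``careful notation'' inside disjoint K\"unneth blocks; it requires showing that, inside each fixed K\"unneth block, the GZ matchings give linearly independent classes, which is a genuine representation-theoretic/combinatorial fact. The paper sidesteps this by citing the injectivity of the cycle class map on the linear Chow group $CH^n_{\mathrm{lin}}(E^{2n})=LZ^n(E^{2n},0)/T$ due to Beilinson--Levin (Corollary 5.3.3 of \cite{BL}). If you wish to prove it directly, you would need to fix a symplectic basis, compute the class of each $D_{ij}$ as $\alpha_i\wedge\beta_j-\beta_i\wedge\alpha_j$, and prove linear independence of the resulting GZ monomial classes within a common K\"unneth summand---which is essentially the Gelfand--Zetlin basis theorem for the relevant $GL_2$ (or $\mathfrak{sl}_2$) representation and is a nontrivial step that your sketch elides.
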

\begin{proof}
It is enough to show that the kernel of the map $r: LZ^n(E^{2n}) \to \Hom(H^*_B(E^n),H^*_B(E^n))$ is contained in $T$. In fact, the map $r$ factors through the linear Chow group $CH^n_{\mathrm{lin}}(E^{2n})$, which is defined as the subspace of $CH^n(E^{2n})_{\mathbb{Q}}$ generated by the classes of abelian subvarieties in $E^{2n}$, equivalently $LZ^n(E^{2n}, 0)/T$ (see Section 4.1 in \cite{KT}). Note that we have:
\begin{equation}
\begin{split}
CH^n_{\mathrm{lin}}(E^{2n}) \otimes \mathbb{Q}_l \to &\Hom(H^*_B(E^n),H^*_B(E^n)) \otimes \mathbb{Q}_l \\
&\cong \Hom(H^*_{\acute{e}t}(E^n, \mathbb{Q}_l),H^*_{\acute{e}t}(E^n, \mathbb{Q}_l)),
\end{split}
\end{equation}
which is an injective map (see Corollary 5.3.3 in \cite{BL}). This implies the injectivity of $CH^n_{\mathrm{lin}}(E^{2n}) \to \Hom(H^*_B(E^n, \mathbb{Q}),H^*_B(E^n, \mathbb{Q}))$ and hence $T \subset \mathrm{Ker} (r)$.
\end{proof}

\section{Construction of $\pi_S$ and $h$}
In this section, we will give an explicit construction of $\pi_S$ and $h$.
First, we define subspaces 
\begin{equation}
\begin{split}
& S = LZ_{1,n} \subset LZ_{1,n-1}\subset \cdots\subset LZ_{1,1}= LZ_{2,n} \subset \\
&LZ_{2,n-1} \subset \cdots \subset LZ_{2,1} = LZ_2 \subset LZ_3=LZ^{n}(E^{2n},0)
\end{split}
\end{equation}
as follows.
\begin{enumerate}
 \item 
The subspace $LZ_{1,t}$ is generated by $S$ and elements of the form
\begin{equation}
\label{typical form} 
P_{i_1}\cdots P_{i_p} D_{j_1k_1} \cdots D_{j_{q}k_{q}}
\end{equation}
such that 
\begin{enumerate}
 \item 
$i_1, \cdots, i_p, j_1, \cdots, j_q, k_1, \cdots, k_q$ are distinct, $p+q=n$,
\item
$
j_1 < j_2 < \cdots < j_q, j_i < k_i(i = 1, \cdots, q), 
$ and
\item
$\min\{t'\mid k_{t'}>k_{t'+1}\}\geq t
$.
\end{enumerate}
Then we have $S = LZ_{1,n} \subset LZ_{1,n-1}\subset\cdots\subset LZ_{1,1}$. 
 \item 
The subspace $LZ_{2} = LZ_{2,1}$ is generated by elements of the form
\eqref{typical form} 
such that 
\[\{i_1, \cdots, i_p\}\cap \{j_1, \cdots, j_q, k_1, \cdots, k_q\}=\emptyset\]
and $p+q=n$. In general, the subspace $LZ_{2,s}$ is generated by elements of the form
\eqref{typical form} 
such that 
\begin{enumerate}
\item
$\{i_1, \cdots, i_p\}\cap \{j_1, \cdots, j_q, k_1, \cdots, k_q\}=\emptyset$
and $p+q=n$,
\item
$i_1, \cdots, i_p$ are distinct, $j_i < k_i(i = 1, \cdots, q)$,
\item
$
j_1 \leq j_2 \leq \cdots \leq j_q
$ and $\min\{s'\mid j_{s'} = j_{s'+1}\}\geq s
$.
\end{enumerate}
 \item 
The subspace $LZ_{3}$ is generated by elements of the form
$$
P_{i_1}\cdots P_{i_p} D^{\epsilon_1}_{j_1k_1} \cdots 
D_{j_{q}k_{q}}^{\epsilon_q}
$$
such that 
$\epsilon_i=\pm 1$\footnote{Here we use the notation $D^{-1}_{jk} = D^{-}_{jk}$.},
$\{i_1, \cdots, i_p\}\cap \{j_1, \cdots, j_q, k_1, \cdots, k_q\}=\emptyset$
and $p+q=n$.
\end{enumerate}
We construct homomorphisms
\begin{align*}
&LZ_3=LZ^n(E^{2n},0)\xrightarrow{(\pi_3, h_3)} LZ_2\oplus LZ^n(E^{2n},1) 
\\
&LZ_{2,s} \xrightarrow{(\pi_{2,s}, h_{2,s})} LZ_{2,s+1}\oplus LZ^n(E^{2n},1) \quad(s=1,\dots,n-1)
\\
&LZ_{1,t}\xrightarrow{(\pi_{1,t}, h_{1,t})} LZ_{1,t+1}\oplus LZ^n(E^{2n},1) 
\quad (t=1,\dots, n-2)\\
&LZ_{1,n-1}\xrightarrow{(\pi_{1,n-1}, h_{1,n-1})} S\oplus LZ^n(E^{2n},1) 
\end{align*}
such that
\begin{align*}
\pi_{\star}(\gamma)=\gamma+\partial h_{\star}(\gamma) 
\quad(\star=(1,1),\dots,(1,n-1),(2,1),\dots,(2,n-1),3)
\end{align*}
We set 
$$
\Pi_{1,n-1}=\pi_{1,n-1}\circ\pi_{1,n-2}\circ\cdots \circ\pi_{1,1}\circ \pi_{2,n-1} \circ \dots \circ \pi_{2,1}\circ\pi_3
:LZ^n(E^{2n},0) \to S,
$$
$$
\Pi_{1,t-1}=\pi_{1,t-1}\circ\cdots \circ\pi_{1,1}\circ \pi_{2,n-1} \circ \dots \circ \pi_{2,1} \circ\pi_3
:LZ^n(E^{2n},0) \to LZ_{1,t},
$$
$$
\Pi_{2}= \pi_{2,n-1} \circ \dots \circ \pi_{2,1} \circ\pi_3
:LZ^n(E^{2n},0) \to LZ_{1,1},
$$
$$
\Pi_{2,s-1}= \pi_{2,s-1} \circ \dots \circ \pi_{2,1} \circ\pi_3
:LZ^n(E^{2n},0) \to LZ_{2,s}.
$$
Note that $\Pi_2 = \pi_{2,n-1}$.
Then we have
\begin{equation} \nonumber
\begin{split}
\Pi_{1,n-1}
=&\Pi_{1,n-2}
+\partial h_{1,n-1}\circ\Pi_{1,n-2}
\\
=&\Pi_{1,n-3}
+\partial h_{1,n-2}\circ\Pi_{1,n-3}
+\partial h_{1,n-1}\circ\Pi_{1,n-2}
\\
&\cdots
\\
=&\Pi_{1,1}
+\partial h_{1,2}\circ\Pi_{1,1}+\cdots
+\partial h_{1,n-1}\circ\Pi_{1,n-2}
\\
=&\Pi_{2}
+\partial h_{1,1}\circ\Pi_{2}
+\sum_{i=1}^{n-2}\partial h_{1,i+1}\circ\Pi_{1,i}
\\
=& \Pi_{2,n-2} + \partial h_{2,n-1} \circ \Pi_{2,n-2} + \partial h_{1,1} \Pi_2 + \sum_{i=1}^{n-2}\partial h_{1,i+1}\circ\Pi_{1,i}
\\
& \cdots
\\
=&\pi_{3}
+\partial h_{2,1}\circ\pi_{3}
+\sum_{i=1}^{n-2}\partial h_{2,i+1}\circ\Pi_{2,i} + \partial h_{1,1} \Pi_2
+\sum_{i=1}^{n-2}\partial h_{1,i+1}\circ\Pi_{1,i}
\\
=& \mathrm{id}
+\partial h_{3}+
\partial h_{2,1}\circ\pi_{3}
+\sum_{i=1}^{n-2}\partial h_{2,i+1}\circ\Pi_{2,i}
\\
&+\partial h_{1,1}\circ\Pi_{2}
+\sum_{i=1}^{n-2}\partial h_{1,i+1}\circ\Pi_{1,i}
\end{split}
\end{equation}
Therefore by setting, 
\[
h=h_{3}
+ h_{2,1}\circ\pi_{3}
+\sum_{i=1}^{n-2} h_{2,i+1}\circ\Pi_{2,i}
+ h_{1,1}\circ\Pi_{2}
+\sum_{i=1}^{n-2} h_{1,i+1}\circ\Pi_{1,i},
\]
we have
$\Pi_{1,n-1}(\gamma)=\gamma+\partial(h(\gamma)).$
We recall that:
\begin{enumerate}
\item
$\partial \widetilde{W}_{Vij} = D_{ij} + (id \times \sigma)_*D_{ij} = D_{ij} + D_{ij}^-$;
\item
$\partial \widetilde{W}_{Vijk} = P_iD_{jk} + D_{ij}D_{ik}$;
\item
$\partial \widetilde{W}_{Vijkl} = D_{ij}D_{kl}+D_{ik}D_{jl}+D_{il}D_{jk}$.
\end{enumerate}
\textbf{The construction $(\pi_3,h_3)$}:
\

We define the morphism $LZ_3=LZ^n(E^{2n},0)\xrightarrow{(\pi_3, h_3)} LZ_2\oplus LZ^n(E^{2n},1) $ as follows:
\begin{eqnarray}\nonumber
&\pi_3(P_{i_1}\cdots P_{i_p} D^{\varepsilon_1}_{j_1k_1} \cdots 
D_{j_{q}k_{q}}^{\varepsilon_q})
= \varepsilon_1 \cdots \varepsilon_q P_{i_1}\cdots P_{i_p} D_{j_1k_1} \cdots 
D_{j_{q}k_{q}}; \\ \nonumber
&h_3(P_{i_1}\cdots P_{i_p} D^{\varepsilon_1}_{j_1k_1} \cdots 
D_{j_{q}k_{q}}^{\varepsilon_q})  \nonumber
=  \varepsilon_2 \cdots \varepsilon_q P_{i_1}\cdots P_{i_p} (\frac{1-\varepsilon_1}{2} \widetilde{W}_{Vj_1k_1}) D_{j_2k_2} \cdots D_{j_qk_q}
 \\ \nonumber
&+ \varepsilon_3 \cdots \varepsilon_q P_{i_1}\cdots P_{i_p} D^{\varepsilon_1}_{j_1k_1} (\frac{1-\varepsilon_2}{2} \widetilde{W}_{Vj_2k_2}) D_{j_3k_3} \cdots D_{j_qk_q}  \nonumber
+ \cdots \\ \nonumber
&+ P_{i_1}\cdots P_{i_p} D^{\varepsilon_1}_{j_1k_1} \cdots D^{\varepsilon_{q-1}}_{j_{q-1}k_{q-1}} (\frac{1-\varepsilon_q}{2} \widetilde{W}_{Vj_qk_q}).
\end{eqnarray}

\noindent \textbf{The construction of $(\pi_{2,s}, h_{2,s})$}:
\

Without loss of generality, we may further assume that the generators 
\[P_{i_1}\cdots P_{i_p} D_{j_1k_1} \cdots D_{j_{q}k_{q}} \in LZ_{2,s} \]
satisfy: $k_s < k_{s+1}$ if $j_s = j_{s+1}$.
\

We define the morphism $LZ_{2,s} \xrightarrow{(\pi_{2,s}, h_{2,s})} LZ_{2,s+1}\oplus LZ^n(E^{2n},1)$ as follows:
\begin{itemize}
\item
$P_{i_1}\cdots P_{i_p} D_{j_1k_1} \cdots D_{j_{q}k_{q}}$ maps to itself, if $\min\{s'\mid j_{s'} = j_{s'+1}\} \geq s+1$;
\item
If $\min\{s'\mid j_{s'} = j_{s'+1}\} = s$, we define:
\begin{equation} \nonumber
\begin{split}
&\pi_{2,s}(P_{i_1}\cdots P_{i_p} D_{j_1k_1} \cdots D_{j_{q}k_{q}}) = - P_{i_1}\cdots P_{i_p} \cdots P_{j_s} \\
&D_{j_1k_1} \cdots D_{j_{s-1}k_{s-1}} D_{k_s k_{s+1}} D_{j_{s+2} k_{s+2}} \cdots D_{j_qk_q}.
\end{split}
\end{equation}
Here we may re-arrange the orders of elements. Note that $k_s > j_s$, and hence the image of $\pi_{2,s}$ lies in $LZ_{2,s+1}$. Then we define:
\begin{equation} \nonumber
\begin{split}
&h_{2,s}(P_{i_1}\cdots P_{i_p} D_{j_1k_1} \cdots D_{j_{q}k_{q}}) = - P_{i_1}\cdots P_{i_p} D_{j_1k_1} \\
&\cdots D_{j_{s-1}k_{s-1}} \widetilde{W}_{Vj_sk_sk_{s+1}} D_{j_{s+2} k_{s+2}} \cdots D_{j_qk_q}.
\end{split}
\end{equation}
\end{itemize}
\noindent \textbf{The construction of $(\pi_{1,t}, h_{1,t})$}:
\

We define the morphism $LZ_{1,t}\xrightarrow{(\pi_{1,t}, h_{1,t})} LZ_{1,t+1}\oplus LZ^n(E^{2n},1)$ as follows:
\begin{itemize}
\item
$P_{i_1}\cdots P_{i_p} D_{j_1k_1} \cdots D_{j_{q}k_{q}}$ maps to itself, if $\min\{t'\mid k_{t'}>k_{t'+1}\}\geq t+1
$;
\item
If $\min\{t'\mid k_{t'}>k_{t'+1}\} =  t$, we define:
\begin{equation} \nonumber
\begin{split}
&\pi_{1,t}(P_{i_1}\cdots P_{i_p} D_{j_1k_1} \cdots D_{j_{q}k_{q}}) = - P_{i_1}\cdots P_{i_p} D_{j_1k_1} \cdots D_{j_{t-1}k_{t-1}} \\
&D_{j_tj_{t+1}}D_{k_{t+1}k_t} D_{j_{t+2}k_{t+2}}\cdots D_{j_{q}k_{q}} - P_{i_1}\cdots P_{i_p} D_{j_1k_1} \cdots D_{j_{t-1}k_{t-1}} \\
& D_{j_tk_{t+1}} D_{\min\{k_t, j_{t+1}\} \max\{k_t, j_{t+1}\}}D_{j_{t+2}k_{t+2}}\cdots D_{j_{q}k_{q}} \\
& \mathrm{and} \\
&h_{1,t}(P_{i_1}\cdots P_{i_p} D_{j_1k_1} \cdots D_{j_{q}k_{q}}) = -P_{i_1}\cdots P_{i_p} D_{j_1k_1} 
\cdots D_{j_{t-1}k_{t-1}} \\
&\widetilde{W}_{Vj_tk_tj_{t+1}k_{t+1}}D_{j_{t+2}k_{t+2}}\cdots D_{j_{q}k_{q}}. 
\end{split}
\end{equation}
\noindent Note that we use here: $j_{t+1} < k_{t+1} < k_t$ and $k_{t+1} < \max\{k_t, j_{t+1}\}$.
\end{itemize}

\section{The resolution of DG complexes}
In this section, we want to study the resolution of $\mathbf{R} \Gamma_M(E^n)$ for any positive integer $n$.
\

We recall some properties of group rings of symmetric groups. For each positive integer $n$, we denote the symmetric group of a totally ordered set $A$ with $n$ elements  by $\Sigma(A)$. It is known that the representation theory of $\Sigma(A)$ is determined by the Young tableaux. If a labeling of a tableau is strictly increasing to the right across the rows and strictly increasing down the columns, then it is called a standard tableau and the set of standard tableaux of $A$ is denoted by $Tab(A)$.  For a tableau $Y$ consisting of $A$, we can define a projector $e_Y \in \mathbb{Q}[\Sigma(A)]$, which are called a Young symmetrizer. All the symmetrizers indexed by standard Young tableaux lead a decomposition of $\mathbb{Q}[\Sigma_n]$, i.e, we have
\begin{equation} \label{dec of sym}
\mathbb{Q}[\Sigma_n] = \bigoplus \mathbb{Q}[\Sigma_n]e_Y. 
\end{equation}
See Section 4.2 in \cite{FH}.
\

Given an elliptic curve $E$, we let $E_i$ be a copy of $E$ indexed by $i \in A$. The self product $\prod_{i \in A} E$ is denoted by $E^A$. Given a Young tableaux $Y$ of $A$, we let:
\[i_Y^* \in \underline{\Hom}^0(\mathbf{R} \Gamma_M(E^A), (\mathbb{Q} \oplus V[-1] \oplus \mathbb{Q}(-1)[-2])^{\otimes A})\]
be the composition of homomorphisms
\begin{equation}
\begin{split}
& \mathbf{R} \Gamma_M(E^A) = \mathbf{R} \Gamma_M(E)^{\otimes A} \xrightarrow{\alpha^{\otimes A}} (\mathbb{Q} \oplus V[-1] \oplus \mathbb{Q}(-1)[-2])^{\otimes A} \\
& \xrightarrow{e_Y} (\mathbb{Q} \oplus V[-1] \oplus \mathbb{Q}(-1)[-2])^{\otimes A}
\end{split}
\end{equation}
and
\[\pi_Y^* \in \underline{\Hom}^0((\mathbb{Q} \oplus V[-1] \oplus \mathbb{Q}(-1)[-2])^{\otimes A}, \mathbf{R} \Gamma_M(E^A))\]
be the composition of homomorphisms
\begin{equation}
\begin{split}
& (\mathbb{Q} \oplus V[-1] \oplus \mathbb{Q}(-1)[-2])^{\otimes A}  \xrightarrow{e_Y} (\mathbb{Q} \oplus V[-1] \oplus \mathbb{Q}(-1)[-2])^{\otimes A} \\
& \xrightarrow{\beta^{\otimes A}} \mathbf{R} \Gamma_M(E)^{\otimes A} = \mathbf{R} \Gamma_M(E^A) .
\end{split}
\end{equation}
\begin{lemma}
There exists an algebraic cycle 
\[H_A  \in \underline{\Hom}^{-1}(\mathbf{R} \Gamma_M(E^A), \mathbf{R} \Gamma_M(E^A))\] such that
\[
\partial(H_A) = id - \sum_{Y \in Tab(A)} \pi_Y^* i_Y^*,
\]
which is called the standard homotopy for id-const.
\end{lemma}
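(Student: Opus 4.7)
The plan is to reduce the statement to a tensor-power construction, using the first proposition (which provides a cycle $W := W_{\mathbf{R}\Gamma_M(E)}$ of degree $-1$ satisfying $\partial W = \beta\alpha - \mathrm{id}_{\mathbf{R}\Gamma_M(E)}$) together with the Young decomposition (\ref{dec of sym}).

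First I would identify the sum $\sum_{Y \in Tab(A)} \pi_Y^* i_Y^*$. Unwinding the definitions of $i_Y^*$ and $\pi_Y^*$, one has
\[
\pi_Y^* \circ i_Y^* \;=\; \beta^{\otimes A} \circ e_Y \circ e_Y \circ \alpha^{\otimes A}.
\]
Since (\ref{dec of sym}) expresses $\mathbb{Q}[\Sigma(A)] = \bigoplus_Y \mathbb{Q}[\Sigma(A)]e_Y$ as a direct sum of left ideals, the family $\{e_Y\}_{Y \in Tab(A)}$ is a complete set of orthogonal idempotents, so $\sum_Y e_Y = 1$ and $e_Y^2 = e_Y$. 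Summing over $Y$ therefore yields
\[
\sum_{Y \in Tab(A)} \pi_Y^* \circ i_Y^* \;=\; \beta^{\otimes A} \circ \alpha^{\otimes A} \;=\; (\beta\alpha)^{\otimes A}.
\]
It thus suffices to write $\mathrm{id}_{\mathbf{R}\Gamma_M(E^A)} - (\beta\alpha)^{\otimes A}$ as a boundary.

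Enumerating $A = \{1,\dots,n\}$, I would set
\[
H_A \;:=\; -\sum_{i=1}^{n} \mathrm{id}_{\mathbf{R}\Gamma_M(E)}^{\otimes(i-1)} \otimes W \otimes (\beta\alpha)^{\otimes(n-i)}.
\]
Each summand has degree $-1$, so $H_A \in \underline{\Hom}^{-1}(\mathbf{R}\Gamma_M(E^A),\mathbf{R}\Gamma_M(E^A))$. Because $\alpha$ and $\beta$ are closed (being assembled from graphs of morphisms together with the closed idempotent $\epsilon$ of (\ref{1-comp})), the DG Leibniz rule reduces $\partial H_A$ to
\[
\partial H_A \;=\; -\sum_{i=1}^n \mathrm{id}^{\otimes(i-1)} \otimes (\beta\alpha - \mathrm{id}) \otimes (\beta\alpha)^{\otimes(n-i)},
\]
and this is a telescoping sum in the pair $(\mathrm{id}, \beta\alpha)$ which collapses to $\mathrm{id}^{\otimes n} - (\beta\alpha)^{\otimes n}$, completing the construction.

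The main obstacle is really just bookkeeping: one must confirm that the Young symmetrizers $e_Y$ in (\ref{dec of sym}) are normalized to be genuine orthogonal idempotents (otherwise a scalar correction would have to be inserted between the two factors of $e_Y$ in $\pi_Y^* i_Y^*$), and one must check that the Koszul signs in the DG Leibniz rule collapse as claimed. Since the only non-closed tensor factor in each summand of $H_A$ is the single copy of $W$, these signs all reduce to $+1$, and no further subtleties arise. The construction visibly depends on the chosen ordering of $A$, but any enumeration produces a valid homotopy for the lemma as stated.
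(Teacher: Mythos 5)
Your proposal is correct and takes essentially the same approach as the paper: you reduce $\sum_{Y}\pi_Y^* i_Y^*$ to $(\beta\alpha)^{\otimes A}$ via the Young decomposition \eqref{dec of sym}, and then build $H_A$ from the homotopy $W_{\mathbf{R}\Gamma_M(E)}$ of Proposition 2.3. Your explicit telescoping formula $H_A = -\sum_{i}\mathrm{id}^{\otimes(i-1)}\otimes W\otimes(\beta\alpha)^{\otimes(n-i)}$ simply makes precise what the paper states tersely as ``the $A$-th product of $-W_{\mathbf{R}\Gamma_M(E)}$,'' and you correctly flag the normalization assumption on the $e_Y$ that the paper also implicitly uses when writing $\sum_Y e_Y^2 = 1$.
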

\begin{proof}
According to the equation (\ref{dec of sym}), we know that
\begin{equation}
\sum_{Y \in Tab(A)} \pi_Y^* i_Y^* = \beta^{\otimes A} (\sum_{Y \in Tab(A)} e_Y^2 ) \alpha^{\otimes A} = \beta^{\otimes A} \alpha^{\otimes A} = (\beta \alpha)^{\otimes A}.
\end{equation}
Then using Proposition 2.3, we will get the desired $H_A$, i.e. the $A$-th product of $-W_{\mathbf{R} \Gamma_M(E)}$.
\end{proof}
\begin{remark}
We recall that $\Gamma_A = (\mathbb{Z}/2\mathbb{Z})^A \rtimes \Sigma(A)$ acts on $E^A$, where $\Sigma(A)$ permutes the components of $E^A$ and the $i$-th generator $(0, \cdots, 1, \cdots, 0)$ in $ (\mathbb{Z}/2\mathbb{Z})^A$ acts on the $i$-th component $E$ of $E^A$ by the inversion $x \to -x$. The sign character is defined on $(\mathbb{Z}/2\mathbb{Z})^A$, which is denoted by $\rho_A$ and it can be extended to $\Gamma_A$ trivially. The invariant part of $\mathbf{R} \Gamma_M(E^A)$ under the action of $\rho_A$ is just $V^A$. Because the intersection product between $H_A$ and the graph of $\rho_A$ is commutative, the restriction of $H_A$ on $V$ leads an algebraic cycle $H_{V^{\otimes A}}$ in $\underline{\Hom}^{-1}(V^{\otimes A}, V^{\otimes A})$ satisfies its boundary is $id_{V^{\otimes A}} - \sum_{Y \in Tab(A)} \pi_Y^* i_Y^*$.
\end{remark}

\begin{definition}
A DG complex is called a Schur complex if its terms are a direct sum of $e_Y(V^{\otimes n})(m)[p]$, which is isomorphic to $S_{\lambda}(V^{\otimes n})(m)[p]$, where $\lambda$ is the Young diagram underlying the Young tableau $Y$ and $S_{\lambda}$ is the Schur functor associated to $\lambda$.  
\end{definition}
Now we want to construct a resolution of a DG complex
\begin{equation} \label{DG comples}
\begin{split}
& \cdots  \to \oplus \mathbf{R} \Gamma_M(E^{n_1})(m_1)[p_1] \\
& \xrightarrow{f_{21}} \oplus \mathbf{R} \Gamma_M(E^{n_2})(m_2)[p_2] \\
& \xrightarrow{f_{32}} \oplus \mathbf{R}\Gamma_M(E^{n_3})(m_3)[p_3] \to \cdots 
\end{split}
\end{equation}
via the Schur complexes. By the definition of DG complexes, we have $\partial(f_{21}) = \partial(f_{32}) = 0$ and $\partial(f_{31}) = f_{32}f_{21}$, where $f_{31}:  \oplus \mathbf{R} \Gamma_M(E^{n_1})(m_1)[p_1] \to \oplus \mathbf{R}\Gamma_M(E^{n_3})(m_3)[p_3]$.
\

More precisely, we can form another DG complex:
\begin{equation} \label{resolution}
\begin{split}
& \cdots  \to \oplus (\mathbb{Q} \oplus V[-1] \oplus \mathbb{Q}(-1)[-2])^{\otimes n_1}(m_1)[p_1]  \\ &\xrightarrow{F_{21}} \oplus (\mathbb{Q} \oplus V[-1] \oplus \mathbb{Q}(-1)[-2])^{\otimes n_2}(m_2)[p_2] \\
& \xrightarrow{F_{32}} \oplus (\mathbb{Q} \oplus V[-1] \oplus \mathbb{Q}(-1)[-2])^{\otimes n_3}(m_3)[p_3] \to \cdots 
\end{split}
\end{equation}
by letting
\[
F_{21} =( \sum_{Y \in Tab(n_2)} i_Y^*) f_{21}  (\sum_{Y \in Tab(n_1)}  \pi_Y^*), 
\]
\[
F_{32} =  (\sum_{Y \in Tab(n_3)} i_Y^*) f_{32} (\sum_{Y \in Tab(n_2)}  \pi_Y^*)
\]
and
\[
F_{31} = (\sum_{Y \in Tab(n_3)} i_Y^*) (f_{31} - f_{32} H_{n_2} f_{21} )(\sum_{Y \in Tab(n_1)}  \pi_Y^*).
\]
Then we have:
\begin{equation} \nonumber
\begin{split}
\partial(F_{31}) =& (\sum_{Y \in Tab(n_3)} i_Y^*) (\partial(f_{31}) - f_{32} \partial (H_{n_2}) f_{21}) (\sum_{Y \in Tab(n_1)}  \pi_Y^*) \\
= & (\sum_{Y \in Tab(n_3)} i_Y^*)(f_{32} f_{21} - f_{32}(id - \sum_{Y \in Tab(n_2)}  \pi^*_Y i^*_Y)f_{21})(\sum_{Y \in Tab(n_1)}  \pi_Y^*) \\
=& (\sum_{Y \in Tab(n_3)} i_Y^*) f_{32} (\sum_{Y \in Tab(n_2)}  \pi^*_Y i^*_Y) f_{21} (\sum_{Y \in Tab(n_1)}  \pi_Y^*) \\
=&(\sum_{Y \in Tab(n_3)} i_Y^*) f_{32} (\sum_{Y \in Tab(n_2)}  \pi^*_Y) (\sum_{Y \in Tab(n_2)}  i^*_Y) f_{21} (\sum_{Y \in Tab(n_1)}  \pi_Y^*) \\
=& F_{32} F_{21}.
\end{split}
\end{equation}
We let $h_{21} = ( \sum_{Y \in Tab(n_2)} i_Y^*) f_{21} H_{n_1}$, which is a morphism:
\[
\oplus \mathbf{R} \Gamma_M(E^{n_1})(m_1)[p_1] \to \oplus (\mathbb{Q} \oplus V[-1] \oplus \mathbb{Q}(-1)[-2])^{\otimes n_2}(m_2)[p_2] 
\]
$h_{32} =  ( \sum_{Y \in Tab(n_3)} i_Y^*) f_{32} H_{n_2}$, which is a morphism:
\[
\oplus \mathbf{R} \Gamma_M(E^{n_2})(m_2)[p_2] \to \oplus (\mathbb{Q} \oplus V[-1] \oplus \mathbb{Q}(-1)[-2])^{\otimes n_3}(m_3)[p_3] 
\]
and similarly $h_{31} = ( \sum_{Y \in Tab(n_3)} i_Y^*) (f_{31} - f_{32}H_{n_2}f_{21}) H_{n_1}$. Then we have:
\begin{equation} \nonumber
\partial(h_{21}) = ( \sum_{Y \in Tab(n_2)} i_Y^*) f_{21} - F_{21}( \sum_{Y \in Tab(n_1)} i_Y^*),
\end{equation}
\begin{equation} \nonumber
\partial(h_{32}) = ( \sum_{Y \in Tab(n_3)} i_Y^*) f_{32} - F_{32}( \sum_{Y \in Tab(n_2)} i_Y^*),
\end{equation}
and
\begin{equation} \nonumber
\partial(h_{32}) = [( \sum_{Y \in Tab(n_3)} i_Y^*) f_{31} - h_{32}f_{21}] - [F_{31}( \sum_{Y \in Tab(n_1)} i_Y^*) -F_{32}h_{21}].
\end{equation}
Moreover, the general formulas are given by
\begin{equation}
\begin{split}
F_{p, q} &=   (\sum_{Y \in Tab(n_p)} i_Y^*) f_{p, q}  (\sum_{Y \in Tab(n_q)} \pi_Y^*) \\
&- \sum_{q < r < p}  (\sum_{Y \in Tab(n_p)} i_Y^*) f_{p,r} H_{n_r} f_{r, q} (\sum_{Y \in Tab(n_q)} \pi_Y^*) \\
&+ \sum_{q< r_1 < r_2 < q}  (\sum_{Y \in Tab(n_p)} i_Y^*) f_{p,r_1} H_{n_{r_1}} f_{r_1, r_2} H_{n_{r_2}} f_{r_2, q} (\sum_{Y \in Tab(n_q)} \pi_Y^*) \\
& - \cdots,
\end{split}
\end{equation}
and 
\begin{equation}
\begin{split}
h_{p,q} &=   (\sum_{Y \in Tab(n_p)} i_Y^*) f_{p,q} H_{n_q} \\
&-\sum_{q<r<p}  (\sum_{Y \in Tab(n_p)} i_Y^*) f_{p,r} H_{n_r} f_{r, q} H_{n_q} \\
&+\sum_{q< r_1 < r_2 < q}  (\sum_{Y \in Tab(n_p)} i_Y^*)f_{p,r_1} H_{n_{r_1}} f_{r_1, r_2} H_{n_{r_2}} f_{r_2, q} H_{n_q} \\
&- \cdots.
\end{split}
\end{equation}
This implies that, for a given DG complex $K$ of the type (\ref{DG comples}), there is an object $K^{'}$ of the type (\ref{resolution}) together with a quasi-isomorphism $K \to K^{'}$.

\section{The motivic fundamental groups of punctured elliptic curves}
In this section, we recall the constructions of the nilpotent completion of the fundamental group of an punctured elliptic curve following the work of Deligne and Goncharov. See \cite[Section 3]{DG} and \cite{T10}. Eisenreich \cite{Eis} called these constructions the motivic fundamental group. In particular, we follow the notions used in \cite{Eis}.
\

Let $X$ be a smooth $S$-scheme with smooth subschemes $D_1, D_2, \cdots, D_n$ in $X$. For a given index set $I = (1 \leq i_1 < \cdots < i_s \leq n)$, we denote the intersection of all these $D_{j}$ with $j \in I$ by $D_I$. For simplicity, we use $\mathbb{Q}_X$ to denote the motive of $X$ in the category of smooth motives over $S$. For example, if $X = E^n$, we have $\mathbb{Q}_X = \mathbf{R} \Gamma_M(E^n)$ as we defined before.
\begin{definition}
The relative motive $\mathbb{Q}_{(X; D_1, \cdots, D_n)}$ is defined as the complex:
\[
\mathbb{Q}_X \to \bigoplus^n_{i = 1} \mathbb{Q}_{D_i} \to \cdots \to \bigoplus^n_{|I| = s} \mathbb{Q}_{D_I} \to \cdots \to \mathbb{Q}_{D_1 \cap \cdots \cap D_n}
\]
sitting in degree $0$ up to $n$, where the differential in degree $s$ is given by the alternating sum of
\[
\partial^s = \sum_{|I| = s} \sum^n_{i = 1} (-1)^i \partial^s_{I, i}
\]
and each component is defined by
\[
\partial^s_{I, i} = \begin{cases}
i^*_{D_I \subset D_{I \cup \{i\}}} & \mathbf{if} \ i \notin I, \\
0  & \mathbf{if} \ i \in I.
\end{cases} 
\]
\end{definition}
Our main example is the following:
\begin{example}
Let $X$ be a smooth, quasi-projective scheme over a reduced scheme $S$ and let $x, y: S \to X$ be two sections of the structure map $\pi: X \to S$. We are interested in the following subsets of $X^n$:
\begin{eqnarray} \nonumber
D^{(n)}_0 & =& x(S) \times X^{n-1} \\ \nonumber
D^{(n)}_i &=& \{ x_i = x_{i+1} \} \subset X^n \ \mathbf{for} \ 1\leq i \leq n-1 \\ \nonumber
D^{(n)}_n &=& X^{n-1} \times y(S).
\end{eqnarray}
Then we have a natural isomorphism
\begin{equation} \label{bar}
b_{\leq 0}\left(\mathbb{Q}_{(X; D_0^{(n)}, \cdots, D_n^{(n)})}[n]\right) \cong \widetilde{B}^{mot}_n(X \mid S)_{x, y}, 
\end{equation}
where $\widetilde{B}^{mot}_n(X \mid S)_{x, y}$ is the $n$-th normalized motivic bar complex of $X$, which is defined in \cite[Definition 2.4.6]{Eis} and $b_{\leq 0}$ is the brutal truncation from above after degree $0$. The proof of (\ref{bar}) can be found in \cite[Theorem 2.7.3]{Eis} or \cite[Section 3]{DG}.
\end{example}
Now we move to the concrete examples and let $S$ be a field $k$.
\subsection{The case of $\mathbb{P}^{1} - \{0, 1, \infty\}$}
Consider $X = \mathbb{P}^{1} - \{0, 1, \infty\}$, which is defined over a field $k$. Take two distinct rational points $a, b \in X$. We apply the above construction and let $n = 2$ for simplicity. Then we get:
\begin{equation} \label{fundamental gp for P1minus3}
\mathbb{Q}_{X^2} \to \mathbb{Q}_{a \times X} \oplus \mathbb{Q}_{\Delta X} \oplus \mathbb{Q}_{X \times b} \rightarrow \mathbb{Q}_{(a, a)} \oplus \mathbb{Q}_{(a,b)} \oplus \mathbb{Q}_{(b,b)},
\end{equation}
where $\Delta X \subset X^2$ is the diagonal. It is know that, via the Mayer-Vietoris property, we have:
\[
(\mathbb{Q}_{\mathbb{A}^1} \to \mathbb{Q}_{\mathbb{A}^1-\{0\}} \oplus  \mathbb{Q}_{\mathbb{A}^1-\{1\}})\xrightarrow{q.i.} \mathbb{Q}_{X}.
\]
Next we can apply the construction of Section 4 if we consider the homotopy
\[
H_n \in \underline{\Hom}^{-1}(\mathbb{Q}_{\mathbb{A}^n}(q)[p], \mathbb{Q}_{\mathbb{A}^n}(q)[p])
\]
defined by the cycle
\[
\{(x, x-tx, t) \in \mathbb{A}^n \times \mathbb{A}^n \times \mathbb{A}^1 \mid x \in \mathbb{A}^n, t \in \mathbb{A}^1\} \in Z^n(\mathbb{A}^n \times \mathbb{A}^n, 1).
\]
Then one may consider its resolution and compute the higher homotopy. For example, we consider the component
\[
\mathbb{Q}_{\mathbb{A}^2} \to \mathbb{Q}_{(\mathbb{A}^1-\{0\}) \times \mathbb{A}^1} \to \mathbb{Q}_{(\mathbb{A}^1-\{0\}) \times (\mathbb{A}^1-\{1\})} \to \mathbb{Q}_{(\mathbb{A}-\{0\}) \times b} \to \mathbb{Q}_{(b,b)}.
\]
or a simplified component
\[\xymatrixcolsep{1pc}\xymatrix{
 \mathbb{Q}_{(\mathbb{A}^1-\{0\} )\times (\mathbb{A}^1-\{1\} )} \ar[r] \ar[d]  &\mathbb{Q}_{(\mathbb{A}^1-\{0\} ) \times b}\ar[r] \ar[d]  &\mathbb{Q}_{(b,b)} \ar[d] \\
 \mathbb{Q}(-2)[-2] \myarat{-->}[r] \myarat<1ex>[u] & (\mathbb{Q} \oplus \mathbb{Q}(-1)[-1])  \myarat{-->}[r] \myarat<1ex>[u] & \mathbb{Q} \myarat<1ex>[u]\\
}
\]
The formula in the end of Section 4 provides us a cycle
\[
(t, 1-t, 1 - \frac{b}{t}) \in Z^2(\mathrm{Spec}(k), 3).
\]




\subsection{The punctured elliptic curve}
Now we go back to the motivic fundamental group of a punctured elliptic curve $X = E - \{O\}$, where $E$ is an elliptic curve defined over $\mathrm{Spec}(k)$. Notice that, if we choose $x$ and $y$ as a pair of different points $a, b \in X$,  the Betti realization of the relative motive $\mathbb{Q}_{(X^n; D_0^{(n)}, \cdots, D_n^{(n)})}$:
\begin{equation}
\mathbb{Q}_{X^n} \to \bigoplus_{i=0}^n \mathbb{Q}_{D^{(n)}_i} \to \cdots \to \bigoplus_{i=0}^n \mathbb{Q}_{ \cap_{j \neq i} D^{(n)}_j}
\end{equation}
is the dual of the $n$-th truncation of the nilpotent completion of the fundamental group of a punctured elliptic curve $\mathbb{Q}[\pi_1(X, a, b)]/I^n$. See Beilinson's construction in \cite[Section 3]{DG}.
\

We fix isomorphisms:
\begin{eqnarray}  \label{fix iso}
(p_2, \cdots, p_n) &:& D^{(n)}_0 \to X^{n-1} \\ \nonumber
(p_1, p_2, \cdots, \widehat{p_{i+1}}, \cdots, p_n) &:& D^{(n)}_i \to X^{n-1}  \ \mathbf{if} \ 1 \leq i \leq n-1 \\ \nonumber
(p_1, p_2, \cdots, p_{n-1}) &:& D^{(n)}_n \to X^{n-1},
\end{eqnarray}
where $p_i: X^n \to X$ is the $i$-th projection. Under these isomorphisms together with
\[
(\mathbb{Q}_{0}(-1)[-2] \to \mathbb{Q}_E)  \xrightarrow{\cong} \mathbb{Q}_X
\]
the complex of the relative motive $\mathbb{Q}_{(X^n; D_0^{(n)}, \cdots, D_n^{(n)})}$ can be viewed as a DG complex with each term having the type as we have seen in the previous section. For example:
\begin{example} \label{gen of Totaro}
Assume that $n = 2$. The relative motive $\mathbb{Q}_{(X^2; D^{(2)}_0, D^{(2)}_1, D^{(2)}_2)}$:
\[
\mathbb{Q}_{X^2} \xrightarrow{\gamma_1} \mathbb{Q}_{a \times X} \oplus \mathbb{Q}_{\Delta X} \oplus \mathbb{Q}_{X \times b} \xrightarrow{\gamma_2} \mathbb{Q}_{(a,a)} \oplus \mathbb{Q}_{(a,b)} \oplus \mathbb{Q}_{(b,b)},
\] 
can be expressed as
\begin{scriptsize}
\begin{equation} \nonumber
\xymatrix{
\mathbb{Q}_{(0,0)}(-2)[-4] \ar[r] &\mathbb{Q}_{E\times \{0\}}(-1)[-2] \oplus \mathbb{Q}_{\{0\} \times E}(-1)[-2] \ar[r]\ar[d] &\mathbb{Q}_{\mathbb{E}^2} \ar[d] \\
& \mathbb{Q}_{(a,0)}(-1)[-2] \oplus \mathbb{Q}_{(0,0)}(-1)[-2] \oplus \mathbb{Q}_{(0,b)}(-1)[-2]  \ar[r] & \mathbb{Q}_{a \times E} \oplus \mathbb{Q}_{\Delta E} \oplus \mathbb{Q}_{E \times b} \ar[d]\\
& &\mathbb{Q}_{(a,a)} \oplus \mathbb{Q}_{(a,b)} \oplus \mathbb{Q}_{(b,b)}
}
\end{equation}
\end{scriptsize}
A priori, the above diagram is a DG complex. Since the morphisms arise from the geometric ones, the double complex is a strict complex. Then we can find resolutions of the above DG complexes via replacing each term by its resolution.
Let us consider the component
\[
\mathbb{Q}_{(0,0)}(-2)[-4] \to \mathbb{Q}_{\mathbb{E} \times 0}(-1)[-2] \to \mathbb{Q}_{\mathbb{E}^2} \to \mathbb{Q}_{a \times E} \to \mathbb{Q}_{(a,a)}.
\]
as the previous case. Then it gives rises to the bottom DG-complex:
\begin{scriptsize}
\[\xymatrixcolsep{1pc}\xymatrix{
\mathbb{Q}_{(0,0)}(-2)[-4] \ar[r] \ar[d] & \mathbb{Q}_{\mathbb{E} \times 0}(-1)[-2] \ar[r] \ar[d]  & \mathbb{Q}_{\mathbb{E}^2} \ar[r] \ar[d]  &\mathbb{Q}_{a \times E}\ar[r] \ar[d]  &\mathbb{Q}_{(a,a)} \ar[d] \\
\mathbb{Q}(-2)[-4]  \myarat{-->}[r] \myarat<1ex>[u] & (\mathbb{Q}(-1)[-2]\oplus V(-1)[-3] \oplus \mathbb{Q}(-2)[-4]) \myarat{-->}[r] \myarat<1ex>[u] & \mathbb{Q}_{\mathbb{E}^2} \myarat{-->}[r] \myarat<1ex>[u] & (\mathbb{Q}\oplus V[-1] \oplus \mathbb{Q}(-1)[-2])  \myarat{-->}[r] \myarat<1ex>[u] & \mathbb{Q} \myarat<1ex>[u]\\
}
\]
\end{scriptsize}

The formula of the higher homotopy in Section 4 will give us a cycle, which can be viewed as a generalization of Totaro's cycle.

\end{example}
If we go back to the general case, the similar consideration in Example \ref{gen of Totaro} works. 
\begin{itemize}
\item
Rewrite the relative motive by some double complex whose term are isomorphism to $\mathbb{Q}_{E^n}$ and we get a double complex.
\item
Consider its associated simplex to the above double complex and replace each term by its resolution. Then we get a DG complex in the category of elliptic motives with explicit higher homotopy. 
\item
Based on the results in Section 2.1, we can describe the DG complex as a comodule over the (simplicial) bar construction of the elliptic algebra $\mathcal{E}_{ell}$.
\end{itemize}


\bigskip


\end{document}